\documentclass[12pt,twoside]{amsart}
\usepackage{amsmath}
\usepackage{amssymb}
\usepackage{amscd}
\usepackage{xypic}
\xyoption{all}
\setlength{\textwidth}{16.1cm}
\setlength{\evensidemargin}{0mm} \setlength{\oddsidemargin}{0mm}

\title[Calabi-Yau attractor varieties and degeneration of Hodge structure]{Calabi-Yau attractor varieties and degeneration of Hodge structure}

\author{Mohammad Reza Rahmati}
\thanks{}
\address{Universidad delaSalle Bajio, Leon GTO, Mexico
\hfill\break 
\hfill\break \\
\hfill\break }
\email{mrahmati@cimat.mx}

\newcommand{\comments}[1]{}

\def \A{{\mathcal A}}


\newtheorem{theorem}{Theorem}[section]
\newtheorem{proposition}[theorem]{Proposition}

\newtheorem{definition}[theorem]{Definition}
\newtheorem{remark}[theorem]{Remark}
\newtheorem{example}[theorem]{Example}

\newtheorem{conjecture}[theorem]{Conjecture}

\keywords{String theory flux compactification, Black hole, Attractor CY variety, Degeneration of Hodge structure, Hypergeometric functions, Periods of integrals on CY varieties.
}

\subjclass{}


\begin{document}

\begin{abstract}
We study the structure of string theory flux compactification for a general family of elliptic CY 3-folds. We investigate the locus of the attractor points of the flux compactification in type IIB string theory on the boundary components of period domains. Specifically we give equations describing this locus through the asymptotic of nilpotent orbits on period domains. our approach is a mixture of techniques of asymptotic Hodge theory and the numerical period vectors used in physics. \end{abstract}

\maketitle

\vspace{0.5cm}

\section{Introduction}

\vspace{0.2cm}

A Calabi-Yau (CY) manifold is a compact K\"ahler manifold with trivial canonical class. An example is a smooth projective variety defined by a homogeneous polynomial of degree $(n+2)$ in complex projective space $\mathbb{P}^{n+1}$. For $n=1$ this gives an elliptic curve, and for $n=2$ it yields a K3 surface. For $n=3$ the classification is open. Calabi-Yau varieties appear in the context of mirror symmetry. That is they usually appear as an isomorphic pair and also isomorphic families of varieties. A famous example of the mirror families is the one parameter family 
\begin{equation}
f= z_0^{n+1}+z_1^{n+1}+...+z_n^{n+1}+(n+1)\psi z_0z_1...z_n=0
\end{equation}
due to Dwork, \cite{Dw}. For $n=4$ is the aforementioned quintic family of Candelas et al. The mirror family is defined by 
\begin{equation}
y_1y_2...y_n(y_1+y_2+...+y_n+1)+\frac{(-1)^{n+1}\phi}{(n+1)^{n+1}}=0    
\end{equation}
which defines a family over $\mathbb{P}^1$. The family maybe studied iteratively starting from $n=0$, where both of the equations define a family of quadrics in $\mathbb{P}^1$, and hence are isomorphic fibre-wise, \cite{CGP, COES, CO, CK, COFKM, Ya1, Ya2, Ya3, Ya4, Ya5, Yu2, Yu3}. We consider the projective family of CY 3-folds 
\begin{equation}
f:\mathfrak{X} \longrightarrow \mathbb{P}^1 \setminus 0, 1, \infty
\end{equation}
Each fiber $X_s=f^{-1}(s)$ is a compact complex 3-fold with trivial canonical bundle $\omega_{X_s}=\bigwedge^3 T^*X_s$. The family of middle cohomologies 
\begin{equation}
\mathcal{H}=R^3f_*\mathbb{C} \longrightarrow \mathbb{P}^1 \setminus 0, 1, \infty
\end{equation}
carries a flat connection called the Gauss-Manin connection. Such a family maybe constructed iteratively from a family of 1-lower dimension by quadratic twist, see Section 1 below. If $\xi \in \Gamma([\mathbb{P}^1 \setminus 0 , 1, \infty]; \mathcal{H})$ then the local analytic function 
\begin{equation}
\Pi_{\xi}:s \longmapsto \int_{\gamma_s} \xi
\end{equation}
is called a period integral. The Gauss-Manin connection $\nabla$ reduces to a linear differential equation 
\begin{equation}
{L}_n(s) \Pi_{\xi}(s)=0
\end{equation}
of rank $n$ namely Picard-Fuchs equation. 
There is a meromorphic $(n+1)$-form on $\mathfrak{X}$ 
\begin{equation}
\Omega= \sum_j\frac{(-1)^j\psi}{f}(z_idz_0 \wedge ... \wedge \widehat{dz_i} \wedge ... \wedge dz_{n+1})  
\end{equation}
such that $\Omega$ restricts to a holomorphic $n$-form $\Omega_{\psi}=\text{Res}_{X_s}(\Theta)$ on the smooth fibres. Then the periods of the Hodge structure of the fibration are integrals of the form $\int_{\Sigma}\Omega_{\psi}$ for $\sigma \in H_n(X_s, \mathbb{Z})$. The Hodge filtration on $H^3(X_{\psi}, \mathbb{C})$ is given by 
\begin{equation}
\begin{aligned}
F^3&=\langle \Omega_{\psi} \rangle\\
F^2&=\langle \Omega_{\psi} , \Omega_{\psi}' \rangle\\
F^1&=\langle \Omega_{\psi} , \Omega_{\psi}', \Omega_{\psi}'' \rangle\\
F^0&=\langle \Omega_{\psi} , \Omega_{\psi}', \Omega_{\psi}'', \Omega_{\psi}''' \rangle
\end{aligned}
\end{equation} 
There is another method to compute the periods by solving the Picard-Fuchs equation
\begin{equation}
\left ( \Theta^{n+1}-\psi \prod_{k+1}^{n+1} (\Theta+\frac{k}{n+2}) \right )\Omega_{\psi}=0, \qquad \Theta=\psi \frac{d}{d\psi} 
\end{equation}
By a choice of basis $(\alpha_i)$ for $H_n(X_{\psi}, \mathbb{Z})$, we can form the period vectors $\Pi_{\Omega}=[\Pi_i=\int_{\alpha_i}\Omega_{\psi}]$. It is known that the forms 
\begin{equation}
\Omega_{\psi}, \Theta \Omega_{\psi},..., \Theta^n \Omega_{\psi}
\end{equation}
are linearly independent in $H^n(X_{\psi},\mathbb{C})$. It follows that we may form the vectors 
\begin{equation}
\Pi_{\Omega}, \Theta \Pi_{\Omega},..., \Theta^n \Pi_{\Omega}
\end{equation}
in a matrix to obtain a partial period matrix, \cite{FC, GGK2, GGK, KPR, KP, KP1, KNU, Ya1, Ya2, Ya3, Ya4, Ya5}. In fact this construction applies to other smooth families. The first example is the family of elliptic curves 
\begin{equation} 
y^2=4x^3-g_2(t)x-g_3(t), \qquad t \in \mathbb{P}^1
\end{equation}  
where $g_2$ and $g_3$ are polynomials of degrees at most $4$ and $6$ respectively. Set $\Delta=g_2^3-27g_3^2, \  j=g_2^3/\Delta$ where $g_2, \ g_3, \ \Delta, \ j$ are Weierestrass coefficients, discriminant and $J$-function. The Picard-Fuchs equation for the family is given by 
\begin{equation}
\frac{d}{dt}\begin{bmatrix} \omega \\ \eta \end{bmatrix} =\begin{pmatrix} 
\frac{-1}{12}\frac{d \log \delta}{dt} & \frac{3\delta}{2 \Delta}\\
\frac{-g_2\delta}{8 \Delta} & \frac{1}{12}\frac{d \log \delta}{dt}
\end{pmatrix}\begin{bmatrix} \omega \\ \eta \end{bmatrix}, \qquad \delta=3g_3g_2'-2g_2g_3'
\end{equation}
where $\omega=\int_{\gamma}\frac{dx}{y}, \ \eta =\int_{\gamma}\frac{xdx}{y}$ and $\gamma$ being a $1$-cycle, \cite{DM}. The local system $\mathcal{H}=R^1\pi_*\mathbb{C}$ has a two step Hodge filtration 
\begin{equation} 
F^0=H^1(X_t, \mathbb{C}) \supset F^1
\end{equation} 
The Gauss-Manin action on a multivalued section of the canonical extension of $\mathcal{H}$ is given as 
\begin{equation}
\nabla:\ \alpha^*+\left (n_i\frac{\log s}{2\pi \sqrt{-1}}+\psi(s) \right )\beta^* \longmapsto  \left (n_i\frac{d s}{2\pi \sqrt{-1}s}+\psi'(s)ds \right )\beta^*
\end{equation}
where $\alpha$ and $\beta$ are a symplectic basis of $H_1(X_s, \mathbb{C})$, \cite{GGK}. The next example is a family of $K3$ surfaces defined by 
\begin{equation} 
y^2=4x^3-G_2(t,s)x-G_3(t,s)
\end{equation} 
where $G_2, \ G_3$ are polynomials of degree at most $8, \ 12$ in the affine coordinate $s$ and such that they are also polynomials in $t$. The periods are calculated via the integral $\int_{\gamma} ds \wedge \frac{dx}{y}$ and the local systems $\mathcal{H}^2=R^2\pi_*^2\mathbb{C}$ has a weight $2$ Hodge filtration 
\begin{equation} 
F^0=H^2(X_{(t,s)},\mathbb{C}) \supset F^1 \supset F^2
\end{equation} 
An illustration for the second family is 
\begin{equation}
\begin{CD}
E_t @>>> \mathcal{E} @>>>\mathbb{H} \\
@VVV  @VVV  @VV{j(\tau)}V\\
t @>>> \mathbb{P}^1  @>>{j(t)}>\mathbb{P}_{j-line}^1\\
\end{CD} 
\end{equation}
where $j$ is the $j$-function of the fibres, cf. \cite{GGK}. One may proceed inductively to 3-dimensional fibrations over surfaces, 
\begin{equation} 
y^2=4x^3-G_2(s,t,u)x-G_3(s,t,u)
\end{equation} 
having singular fibres $t=0, 1, \infty$, where $s$ and $u$ are affine variables on a surface. We have $X_t \to \mathbb{P}^1$. The cohomologies of the smooth fibres build up a VHS of type $(1,1,1,1)$, whose periods are given by certain hypergeometric functions, \cite{DM}, see also \cite{B, BP, CK, CKS, CK, COFKM, Car, Di, Dol, DKSSVW, Dw, DM, DM1, DHT, FC, GGK2, GGK, Har, Hyb, KP1, Sch, St}.

A Calabi-Yau $n$-fold $X$ is called an attractor if there exists an integral cohomology class $\gamma \in H^n(X, \mathbb{Z})$ such that $\gamma \perp H^{n-1,1}, \ \text{under the symplectic inner product on} \ H^n(X, \mathbb{C})$.
It is supposed that the attractors furnish special points analogous to complex multiplication points on Shimura varieties. Attractor CY varieties appear in the context of supersymmetric flux compactification in type IIB mirror symmetry, \cite{AMo, BGH, BCV, Be, BKSW, CGV, CKS, COFKM, DKMM, DKMM0, GLV, Gr, GBBS, GVW}. 
Another way to reach to this concept is by the string theory superpotential defined by
\begin{equation} 
Z_{\gamma}=e^{\sqrt{-1} \left \langle  \Omega, \overline{\Omega} \right \rangle /2}\int_{\gamma}\Omega
\end{equation} 
where $\Omega \in H^3(X, \mathbb{Z})$. By choosing a symplectic basis $(A_i,B_i)$ for $H_3(X, \mathbb{Z})$ we may write $\gamma=\sum_i a_iA_i-b_iB_i$. Set, $Q=(b_j, a_j)^t$. Then we have 
\begin{equation}
Z_{\gamma}=\frac{Q^t \Sigma \Pi}{(-i\Pi^{\dagger}\Sigma \Pi)^{1/2}}   
\end{equation}
where $\Sigma$ is the intersection form in the symplectic basis. The attractor points minimize the function $|Z_{\gamma}|$ \cite{M, KNY}, see also \cite{HK, JK, Ka, KLL, LY, LMMV2, LMMV, LLW5, LLW4, LLW3, S}. The paper applies the degeneration techniques of variation of Hodge structure (VHS) to study the locus of attractor CY varieties. In Section 4 we review known techniques for partial toridal compactification of the period domain $D$ of the Hodge structure $(V_{\mathbb{Q}}, Q)$ [$Q$ is a nondegenerate biliniear form on $V$]. In the specific compactification the boundary components are associated to the cones of nilpotent transformations in $\mathfrak{g}=Lie(Aut(V, Q)_{\mathbb{C}})$. Given $N \in \mathfrak{g}$, the set 
\begin{equation}
\check{B}(N)=\{\ F \in \check{D}\ |\ Ad(e^{\tau N})F \in D, \text{Im}(\tau) \gg 0\ \}
\end{equation} 
denotes all arising possible LMHS associated to $N$ [$\check{D}$ is the compact dual of $D$]. The quotient 
\begin{equation} 
B(N)=e^{\mathbb{C}N} \setminus \check{B}(N)
\end{equation}
consists of all the LMHS upto reparamentization. We define the boundary component associated to $N$ by $\overline{B(N)}=\Gamma \setminus B(N)$, \cite{K, KP, KPR, KP1}. We propose to find equations that provide constrains for the attractive locus on the boundaries of $M$. That is we try to check out if 
\begin{equation}
\lim_{\text{Im}(z) \to \infty}\exp(-\sqrt{-1}z.N)\Pi
\end{equation}
will belong to $H^{3,0} \oplus H^{0,3}$. Note that the boundary component depends to $N$ and therefore we have to consider several constrains for different boundary components. We explain this by a specific example in Section 5. Some more explanation on the asymptotic behavior of the CY periods is given there. 

We shall consider the complex structure moduli of the CY variety $X$ as a symplectic manifold with the action of the  lie group $G_{\mathbb{R}}=Aut(\Sigma, \mathbb{R})$ of its symmetries. Assume  
\begin{equation} 
\mu:M \to \mathfrak{g}
\end{equation} 
is a moment map for the action, see \cite{VRS}. We will use (21) to define a moment map on the complex structure moduli space $M$ of the CY variety $X$. We prove that the attractor points on the moduli of complex structure $M$, are the critical points of the negative gradient flows of $-\nabla |Z_{\gamma}|$. This gives an alternative interpretation for the attractor locus on $M$. The conclusion is that using known results of GIT for reductive groups the attractors are characterized as the limit points on the integral curves of the negative gradient flow. In other words the limit at infinity on the integral curves also minimizes the absolute value $|Z_{\gamma}|$.

In Section 7 we give an overview of the connections of attractors to some concepts in arithmetic and modularity conjecture for CY 3 folds. In a recent work by Candelas dela Ossa et. al, some examples of modular CY varieties has been studied by checking evidences through the attractor formalism. In fact the attractor formalism suggests in a weight 3 Hodge structure of type $(1,1,1,1)$ to check out if the decomposition of the Hodge structure on $H^3(X, \mathbb{Q})$
as $HS_1 \oplus HS_2$ may be lifted to the motive of the variety $X$. This method is getting done by checking if the corresponding $p$-factor may factorize for some primes of good reduction. In fact in the example of Candelas et al. this unquely identifies certain modular forms whose $L$-functions is equal to the $L$-function of the Galois representation on the \'etale cohomology of $X$, \cite{CGP, COES, CO, CK, COFKM, Car, Cha, Del, Di, Dol, DKSSVW, Dw, DHT, DK, GGK2, GHJ, Gr, GVW, Har, Hyb, HK, KY, KNY, Ka, M, Mi, Mor, Si1, Si2, Yu2, Yu3}
.   
\vspace{0.3cm}

\noindent
\textbf{Explanation on the text and references:} Section 1 is the introduction containing also some basics of Hodge theory. Section two is a brief from \cite{DM} and explains hypergeometric fibration of CY varieties. Section 3 is a brief on attractors and flux compactification mainly from \cite{M}. Section 4 explains a partial toroidal compactification of period domains of Hodge structure, with main references \cite{K, KP,GGK}. In Section 5 we study attractors and the asymptotic Hodge theory. We give a method to compute attractors by the asymptotic boundary in the moduli of HS or period domains. In Section 6 we present a result on the interpretation of attractive locus on the complex structure moduli as critical points of a moment map. In Section 7 we present an overview of connection between attractors and modularity conjecture for CY 3-folds and some relations with class field theory.  
\vspace{0.3cm}

\section{Elliptically fibered family of CY 3-folds}
\vspace{0.2cm}

This section is a brief from \cite{DM} to introduce hypergeometric families of CY manifolds elliptically fibred over varieties of smaller dimensions. The reader can consult with the reference for more details. The elliptically fibred family of CY 3-folds can be analysed iteratively by starting from a family of two points 
\begin{equation} 
y^2=1-s
\end{equation} 
It is isomorphic to a quadric pencil. By choosing a suitable branch cut we obtain a holomorphic section $y^{-1}={}_1F_0\left (\frac{1}{2}|s \right )=(1-s)^{-1/2}$, where ${}_1F_0$ is the hypergeometric function. It is annihilated by the operator 
\begin{equation} 
L_1(s)=\Theta-s(\Theta+1/2), \qquad \Theta=s\frac{d}{ds}
\end{equation} 
with monodromy equal to $1, -1, -1$ over the points $0, 1, \infty$. 

Applying the twist replacement 
\begin{equation} 
y^2 \longmapsto -y^2/x(x-s)
\end{equation} 
in the equation of the former family we obtain the family 
\begin{equation} 
y^2=x(x-1(x-s), \qquad s \in (\mathbb{P}^1 \setminus \{0, 1, \infty\})
\end{equation} 
where the middle cohomologies of the fibres $\mathcal{H}_s=H^1(X_s, \mathbb{C})$ provides a local system of polarized Hodge structure of weight 1 and of type $(1,1)$, where $F_{0,s}$ is the whole $H^1(X_s, \mathbb{C})$ and $F_1=H^{1,0}(X_s)$. the space of harmonic 1-forms is generated by the 1-form $dx/y$. The periods are defined via the integral 
\begin{equation} 
\int {dx}/{y}=\int_0^s{dx}/{\sqrt{x(x-1)(x-s)}}=(i\pi) 
{}_2F_1\left (  _{\ \ 1}^{\frac{1}{2},\frac{1}{2}} \big | \ s \right )={}_1F_0\left (\frac{1}{2}|s \right ) * {}_1F_0\left (\frac{1}{2}|s \right )
\end{equation} 
where the $*$ is the Hadamard product of power series 
\begin{equation} 
(\sum a_n s^n )* (\sum b_n s^n) =\sum a_n b_n s^n
\end{equation} 
The Picard-Fuchs operator becomes 
\begin{equation} 
L_2(s)=\Theta^2-s(\Theta+1/2)^2.
\end{equation} 
We may repeat this process by applying the same twist to get a family of K3 surfaces 
\begin{equation} 
y^2=x(x-1)(x-t)t(t-s)
\end{equation}  
over $\mathbb{P}^1$ where the singular fibres lie over $0, 1, t, \infty$. Again the middle cohomologies of the smooth fibres defines a Hodge structure of weight 2. we can stress the periods in two ways. The first is through the twist we applied above. For suitable varying 1-cycles $\gamma_1, \gamma_2$ that form a basis for the cohomology of the elliptic fibre the periods of the middle cohomology are calculated via the integral $\int_s^tdt\begin{pmatrix}\int_{\gamma_1}dx/y \\ \int_{\gamma_2}dx/y \end{pmatrix}$. For particular choice of branches the periods are related to 
\begin{equation} 
\int_0^s\left (dt/\sqrt{t(t-s)}\right ) 
{}_2F_1\left (  _{\ \ 1}^{\frac{1}{2},\frac{1}{2}} \big | \ s \right )=(i\pi) 
{}_3F_2\left (  _{\ \ 1, 1}^{\frac{1}{2},\frac{1}{2}, \frac{1}{2}} \big | \ s \right )={}_1F_0\left (\frac{1}{2}|s \right ) * (i\pi){}_2F_1\left (  _{\ \ 1}^{\frac{1}{2},\frac{1}{2}} \big | \ s \right )
\end{equation}
The second, the original period is given via integrating the 2-form $dt \wedge dx/y$ over a two cycles $\Gamma$ where it results 
\begin{equation} 
\int_{\Gamma}dt \wedge dx/y=(i \pi)^2  
{}_3F_2\left (  _{\ \ 1, 1}^{\frac{1}{2},\frac{1}{2}, \frac{1}{2}} \big | \ s \right )
\end{equation} 
The Picard-Fuchs operator is given 
\begin{equation} 
L_3(s)=\Theta^3-s(\Theta+1/2)^3.
\end{equation}
Applying the next twist we get the family of CY 3-folds 
\begin{equation} 
y^2=x(x-1)(x-t)t(t-w)w(w-s)
\end{equation} 
We denote $f:\mathfrak{X} \to \mathbb{P}^1$ to be the family parametrized with $s$, where the family of middle cohomologies 
\begin{equation} 
R^3f_*\mathbb{C} \to (\mathbb{P}^1 \setminus 0, 1, \infty)
\end{equation} 
has a Hodge structure of weight 3 and of type $(1, 1, 1, 1, 1)$. By choosing branches the periods are related to 
\begin{equation} 
\int_0^s\left (dt/\sqrt{w(w-s)}\right ) 
{}_3F_2\left (  _{\ \ 1, 1}^{\frac{1}{2},\frac{1}{2}, \frac{1}{2}} \big | \ s \right )=(i\pi) 
{}_3F_2\left (  _{\ \ 1, 1, 1}^{\frac{1}{2},\frac{1}{2}, \frac{1}{2}, \frac{1}{2}} \big | \ s \right )={}_1F_0\left (\frac{1}{2}|s \right ) * (i\pi) 
{}_2F_1\left (  _{\ \ 1}^{\frac{1}{2},\frac{1}{2}, \frac{1}{2}} \big | \ s \right )
\end{equation} 
The second way to present the periods, is by taking the original period given via integrating the 2-form $dw \wedge dt \wedge dx/y$ over a two cycles $\Gamma$ where it results 
\begin{equation} 
\int_{\Gamma}dw \wedge dt \wedge dx/y=(i \pi)^2  
{}_3F_2\left (  _{\ \ 1, 1}^{\frac{1}{2},\frac{1}{2}, \frac{1}{2}} \big | \ s \right )
\end{equation} 
The Picard-Fuchs operator is given 
\begin{equation} 
L_3(s)=\Theta^4-s(\Theta+1/2)^4
\end{equation}
This process continues similarly to higher dimensional fibrations. 

The aforementioned construction maybe done using the Weierstrass normal forms in the equation of fibration, and in that case one obtains hypergeometric functions with slightly different weights. However the same relation exists between the Picard-Fuchs equations and the periods. In general the hypergeometric function 
\begin{equation} 
F(s)={}_nF_{n-1}\left (  _{\ \ 1, 1, ...,  1}^{\rho_1,\rho_2, ..., \rho_n} \big | \ s \right )
\end{equation} 
satisfies the  differential equation
\begin{equation} 
\left [ \Theta^n-s(\Theta+\rho_1)...(\Theta+\rho_n) \right ]F(s)=0
\end{equation}
The construction also applies to the mirror pair. That is on the mirror family we also have an iterative formulas involving the periods and the differential operators. An example is the mirror for the Dwork family stated in equation (2) in the introduction. In this case according to [\cite{DM} theorem 10.2] we have 
\begin{equation} 
\Pi_{n-1}(s)=(2i\pi){}_nF_{n-1}\left (  _{\ \ \frac{1}{n},\frac{2}{n}, ..., \frac{n-1}{n}}^{\frac{1}{n+1},\frac{2}{n+1}, ..., \frac{n}{n+1}} \big | \ s \right )*\Pi_{n-2}(s).
\end{equation} 
In all the examples constructed iteratively we have singular fibres over $0, 1, \infty$. We can normalize the local solutions by setting 
\begin{equation} 
f_{\alpha}(t)=t^{\alpha}{}_nF_{n-1}^{\alpha}\left (  _{\ \ 1, 1, ...,  1}^{\rho_1,\rho_2, ..., \rho_n} \big | \ t \right )=\sum_j\frac{(\rho_1+\alpha)_j...(\rho_n+\alpha)_j}{(1+\alpha)_j^n}t^{j+\alpha}=\sum_{l=1}^{n-1}(2i \pi \alpha)^lf_l(t)
\end{equation} 
where $f_l=\frac{1}{(2i\pi)^l l!} \frac{\partial^l}{\partial \alpha^l}|_{\alpha=0}f_{\alpha}(t)$ and $(\rho)_j=(-1)^j\frac{\Gamma(1-\rho)}{\Gamma(1-k-\rho)}$. The functions $(f_{n-1},...,f_0)$ form a basis of the local solutions near $t=0$ and the monodromy at $t=0$ in this basis is given by 
\begin{equation}
M_0=\begin{pmatrix} 1 & 1 & 1/2 & ...& 1/(n-2)!\\ 0 & 1 & 1 & ... & 1/(n-3)!\\ ... & ... & & ...& \\ 0 & ... & ... & 0 & 1 \end{pmatrix}     
\end{equation}
Similarly by introducing the functions 
\begin{equation} 
F_k({\beta},t)=B_kt^{-\beta}{}_nF_{n-1}^{\alpha}\left (  _{\ \ 1+\beta_k-\beta_1,\ ... \hat{1} ...,\  1+\beta_k-\beta_n}^{\ \ \ \ \ \ \ \ \ \  \rho_k,\ \rho_k,\ ...,\ \rho_k} \big | \ \frac{1}{t} \right )
\end{equation} 
for specific numbers $B_k$, we can write down the the basis of solutions at $t=\infty$. According to the analysis in \cite{DM} the monodromy at $\infty$ with respect to the basis $(F_n,...,F_1)$ is given by 
\begin{equation} 
M_{\infty}=[e^{-2i\pi\beta_n}, e^{-2i\pi\beta_{n-1}}, ..., e^{-2i\pi\beta_1}]
\end{equation} 
The above two monodromy operators are related by the transition matrix between the analytic continuations of the two basis, denoted by $P$. We have $M_{\infty}=P.M_0.P^{-1}$, and the monodromy at the other singularity is given by 
\begin{equation} 
M=M_{\infty}M_0^{-1}.
\end{equation}
cf. \cite{DM} loc. cit. see also \cite{B, BP, CK, CKS, CK, COFKM, Car, Di, Dol, DKSSVW, Dw, DM, DM1, DHT, FC, GGK2, GGK, Har, Hyb, KP1, Sch, St}. 
\vspace{0.3cm}

\section{Attractor points in moduli of complex structures}

\vspace{0.2cm}

The material in this section is well known. We explain two kinds of attractors in dimension 3 with origins from string theory. Lets begin from the definition of the attractor point in the moduli of $X$. 
\begin{definition} 
A Calabi-Yau $n$-fold $X$ is called an attractor if there exists an integral cohomology class $\gamma \in H^n(X, \mathbb{Z})$ such that 
\begin{equation} 
\gamma \perp H^{n-1,1} \qquad \text{under the symplectic inner product on} \ H^n(X, \mathbb{C})
\end{equation} 
where we have considered the Hodge decomposition 
\begin{equation}
H^n(X, \mathbb{C})=H^{n,0}(X) \oplus H^{n-1,1}(X) \oplus ... \oplus H^{0,n}(X)   
\end{equation}
\end{definition}
Let $h^{i,j}=\dim H^{i,j}(X)$ be the Hodge numbers. The definition puts $h^{n-1,1}$ conditions on the coordinates in the moduli of Hodge structure (period domain) of $X$. The points in the moduli space corresponding to attractor varieties are called attractor points. It is a conjecture that an attractor variety is defined over $\overline{\mathbb{Q}}$. The concept is reasonably of interest in Physics when $n=3$. In this case the above conjecture is due to Moore and was supposed that the attractors furnish special points analogous to complex multiplication points on Shimura varieties, \cite{AMo, BP, BGH, BCV, Be, BKSW, CGV, CKS, Ce2, Ce, COFKM, DKMM, DKMM0, GLV, Gr, GBBS, GVW}. 

Let $X$ be a Calabi-Yau 3-fold and $M$ be the moduli of complex structures on $X$. It is known that $\dim(M)=h^{2,1}$. Choose a generator $\Omega \in H^{3,0}(X)$ and let $\gamma \in H_3(X, \mathbb{Z})$ its poincar\'e dual. Also let $A_j, \ B_j$ be a symplectic basis for $H_3(X)$, and  $\alpha_j, \ \beta_j$ the dual basis. Consider the flat coordinates $x_j=\int_{\alpha_j} \Omega, y_j=\int_{\beta_j} \Omega$, and $Y=[a_1, \dots , a_n, b_1, \dots , b_n]$. Then the attractor condition may be written as 
\begin{equation}
\bar{c}Y-c\overline{Y}=\sqrt{-1}\Omega \qquad c =\text{const}, 
\end{equation} 
The attractor points can be equally defined by the fixed points of the gradient flow of the function 
\begin{equation} 
Z_{\gamma}=\frac{|\langle \gamma , \Omega \rangle |}{\langle \Omega, \overline{\Omega} \rangle}, \qquad \gamma \in H_3(X, \mathbb{Z})
\end{equation}
on the moduli space $M$. Then the attractor corresponding to the cohomology class $\gamma$ are the fixed points of the gradient flow, \cite{M, CO, COES, CK, HK, JK, Ka, KLL, LY, LMMV2, LMMV, LLW5, LLW4, LLW3, S}. 

In case the space time is equipped with the metric 
\begin{equation} 
ds^2=-e^{2U}dt^2+e^{-U}dx^2
\end{equation} 
The charge of the black hole is given by $\gamma \in H_3(X,\mathbb{Z})$ the Poincar\'e dual to the form $\Omega$. The complex structure of $X$ is getting varied by the radius $r$ described by the differential equation 
\begin{equation}
    dU/d\rho=-e^U|Z_{\gamma}|, \qquad d\phi/d\rho =\sum_{j=1}^{h^{2,1}}-2e^Ug_{j \bar{j}}\partial_{\bar{j}}|Z_{\gamma}|
\end{equation}
where $\rho=1/r$ and the index $j$ runs through a specific basis $(\eta_j)$ of $H^{2,1}(X)$. The basis can be written specifically by considering local holomorphic coordinates on the moduli $M$ of complex structures on $X$. One has $T_s^{1,0}M=H^{2,1}(X_s), \ (s \in M)$ on the complex structure moduli, and the holomorphic coordinates maybe denoted by $z_i, \ 1 \leq i \leq h^{2,1}$. then the basis can be chosen to be 
\begin{equation}
    \eta_j=e^{K/2}\pi^{2,1}(\partial_{z_i} \Omega)=e^{K/2}\left (\partial_{z_i} \Omega -\frac{\langle \partial_{z_i} \Omega , \overline{\Omega} \rangle }{ \langle  \Omega, \overline{\Omega}  \rangle}\Omega \right )
\end{equation}
where $\pi^{2,1}$ is the projection on the $H^{2,1}$ factor in the Hodge decomposition and the differentiation is considered as part of the period  coordinates involving $M$. The function 
\begin{equation} 
K=\log \left (\sqrt{-1} \left \langle  \Omega, \overline{\Omega} \right \rangle \right )
\end{equation} 
is the Kahler potential of WPZ metric on $M$. Then $g_{i\bar{j}}=-\sqrt{-1}\langle \eta_i, \overline{\eta_j} \rangle $. 
\begin{equation} 
Z_{\gamma}=e^{\sqrt{-1} \left \langle  \Omega, \overline{\Omega} \right \rangle /2}\int_{\gamma}\Omega
\end{equation} 
is a function. These equations can be understood as a gradient flow of the function $|Z_{\gamma}|$ with respect to the metric. By choosing a symplectic basis $(A_i,B_i)$ for $H_3(X, \mathbb{Z})$ we may write $\gamma=\sum_i a_iA_i-b_iB_i$. Let $(\alpha_j, \beta_j)$ be the dual basis and $\Omega=\sum b_i\alpha_i-a_i\beta_i, Q=(b_j, a_j)^T$. Then we have 
\begin{equation}
Z_{\gamma}=\frac{Q^t \Sigma \Pi}{(-i\Pi^{\dagger}\Sigma \Pi)^{1/2}}   
\end{equation}
where $\Sigma$ is the intersection form in the symplectic basis. The attractor points minimize the function $|Z_{\gamma}|$, \cite{M, KNY}.  

The attractor definition comes from type IIB string theory on the variety $\mathbb{R}^4 \times X$ where $\mathbb{R}^4$ is the Lorenzian 4-manifold, i.e. $\mathbb{R}^{1,3}$ and $X$ is a CY variety of complex dimension 3 or real dimension 6. On this space we shall have a theory of supergravity which arises from a self dual 5-form in 10-real dimension called type IIB supergravity. This form is presented by the form 
\begin{equation} 
F \in \Omega^2(\mathbb{R}^4) \otimes H^3(X,\mathbb{R})
\end{equation} 
called the electro-magnetic field. The self duality means 
\begin{equation} 
F=* \ F, \qquad \text{in 10-dimension}, 
\end{equation} 
where $*$ is the Hodge star operator. We deal with a complex symplectic structure on $H^3(X, \mathbb{C})$ defined by a linear transformation 
\begin{equation} 
J:H^3(X, \mathbb{C}) \to H^3(X, \mathbb{C}), \qquad J^2 =-1, \ \langle J x,J y \rangle =\langle x,y \rangle
\end{equation} 
We can define an operator $*_T=*_4 \otimes J$. It satisfies $*_T^2=+1$, and $F=-*_TF$.  Flux vacua in type IIB theory has contribution from two fields defined by the 3-forms $F_{(3)}$ and $H_{(3)}$ and is formulated by the superpotential $W$ which is written in terms of the complex axion-dilaton $\tau=C_0+\sqrt{-1}e^{-\phi}$ and the field $G_{(3)}=F_{(3)}-\tau H_{(3)}$ as the integral
\begin{equation}
W=\int_XG_{(3)} \wedge \Omega
\end{equation}
where $\Omega$ is a holomorphic 3-form. The parameter $\tau$ lies in an upper half plane and the vacuum constraints can be written as $D_{\tau}W=D_iW=0$ where $D_{\tau}= \partial_{\tau}+\partial_{\tau}K, D_j= \partial_{j}+\partial_{j}K$, \cite{S}.

\vspace{0.3cm}

\section{Boundary components on moduli of  Hodge structure of CY 3-folds}

\vspace{0.2cm}

In this section we review a partial toroidal compactification of period domain of Hodge structure. The references are \cite{GGK, KP, K}. A polarized Hodge structure of weight $n$ on a $\mathbb{Q}$ vector space $V$ together with a $(-1)^n$-symmetric non-degenerate $\mathbb{Q}$-bilinear form $Q : V \times V \to \mathbb{Q}$, is given by a representation  
\begin{equation} 
\phi: S^1 \to Aut(V_{\mathbb{R}},Q)
\end{equation} 
defined over $\mathbb{R}$ such that if we denote the $t^p\bar{t}^q$-eigenspace of $\phi(t)$ by $V^{p,q}$, then $p+q=n$ for all non-zero $V^{p,q}$ [$t$ denotes the coordinate on $S^1$]. We denote $\textbf{C}:=\phi(i)$. The adjoint action of the group $G_{\mathbb{R}}=Aut(V_{\mathbb{R}},Q)$ on $\phi$ defines the period domain of polarized Hodge structures of weight $n$ on $V$, denoted by $D$. Denote the centralizer of $\phi$ by $M=Z_{\phi}(G_{\mathbb{R}})$, then $D=G_{\mathbb{R}}/M$. It embeds in $\check{D}=G_{\mathbb{C}}/P$, where $P$ is the centralizer of $\phi$ ($P$ is a parabolic subgroup of $G$), and inherits a complex structure from this embedding. 

A basic example of this is the VHS $\mathcal{V}=\mathcal{V}^{1,0} \oplus \mathcal{V}^{0,1}$ of weight $1$, obtained from the middle cohomology of a fibration of curves of genus $g$. In this case 
\begin{equation} 
D=\textbf{H}_g=\{Z \in M_{g \times g} : Z=^tZ, \ Im (Z) >0\}
\end{equation} 
is the Siegel generalized upper half space. A variation of Hodge structure $\mathcal{V}$ on a quasi-projective variety $S$ can be given by its period map 
\begin{equation} 
\Pi:S \to \Gamma_{\mathbb{Z}} \backslash D
\end{equation}  
where $\Gamma_{\mathbb{Z}}$ is a discrete subgroup of $G$ called monodromy group. Let $h=(h^{p,q})_{p+q=n}$ be the $h$-vector of Hodge numbers. 
\begin{itemize} 
\item When $weight=n=2k+1$ is odd we have 
\begin{equation} 
D_h=SP_n(\mathbb{R})/\prod_{l \leq k}U(h^{l,n-l})
\end{equation} 
\item When $n=2k$ set $h_{\text{odd}}=\sum_{l \ \text{odd}}h^{l,n-l}, \ h_{\text{even}}=\sum_{l \ \text{even}}h^{l,n-l}$. Then 
\begin{equation} 
D_h=SO(h_{\text{odd}}, h_{\text{even}})/ \left (SO(h^{k,k} \times \prod_{l < k}U(h^{l,n-l}) \right ).
\end{equation} 
\end{itemize}

Associated to each nilpotent transformation $N \in \mathfrak{g}:=Lie(G_{\mathbb{C}})$ one defines a limit mixed Hodge structure (LMHS), where 
\begin{equation} 
F_{\lim}=\lim_{\text{Im}(z) \to -\infty}e^{-\sqrt{-1}zN}F_z
\end{equation} 
and the weight filtration is the unique one associated to $N$ according to the Jacobson-Morozov theorem. The LMHS depends upon the choice of the coordinate $z$. The re-parametrization has the effect 
\begin{equation}
F_{\lim} \mapsto e^{\alpha N}F_{\lim}
\end{equation} 
We define the nilpotent orbit as the set of flags $e^{\mathbb{C}N}F_{\lim}$, i.e LMHS modulo reparametrization. In order to compactify the image of period map we partially compactify $\Gamma \setminus D$ by spaces denoted by $\overline{B_{\sigma}}$ where $\sigma$ is a cone of nilpotent transformations in $\mathfrak{g}$. Let $T_1,...,T_n$ be generators of the monodromy group $\Gamma$ and $N_j=\log T_j$. Let also 
\begin{equation} 
\sigma_{\mathbb{R}}=\sum_{j=1}^n \mathbb{R}_{\geq 0} N_j
\end{equation}  
be a cone in the lie algebra $\mathfrak{g}$.
Define  
\begin{equation} 
D_{\sigma}:=Spec([\mathbb{C}[\Gamma(\sigma)^{\vee}])_{an} \cong Hom(\Gamma(\sigma)^{\vee},\mathbb{C})
\end{equation} 
where $ \Gamma(\sigma)^{\text{gp}}=\exp(\sigma_{\mathbb{R}}) \cap G_{\mathbb{Z}}, \  \Gamma(\sigma)=\exp(\sigma) \cap G_{\mathbb{Z}} $, with the torus 
\begin{equation} 
T_{\sigma}:=Spec(\mathbb{C}[\Gamma(\sigma)^{\vee \text{gp}}])_{an} \cong Hom(\Gamma(\sigma)^{\vee \text{gp}} , \mathbb{G}_m) \cong \mathbb{G}_m \otimes \Gamma(\sigma)^{\text{gp}} 
\end{equation} 
The superfix '$\text{gp}$' means the group generated, and the suffix '$\text{an}$' means the associated analytic space and we are using the notation from toric geometry. Given $N \in \mathfrak{g}$, the set 
\begin{equation}
\check{B}(N)=\{\ F \in \check{D}\ |\ Ad(e^{\tau N})F \in D, \text{Im}(\tau) \gg 0\ \}
\end{equation} 
denotes all arising possible LMHS associated to $N$. The quotient 
\begin{equation} 
B(N)=e^{\mathbb{C}N} \setminus \check{B}(N)
\end{equation}  
consists of all the LMHS upto reparamentization. We define the boundary component associated to $N$ by 
\begin{equation} 
\overline{B(N)}=\Gamma \setminus B(N)
\end{equation} 
It has been shown in \cite{KP} that there is a tower of fibrations 
\begin{equation} 
B(N) \to ... \to B(N)_k \to ... \to D(N)
\end{equation}  
which passes to the quotient by $\Gamma$ with fibers the intermediate Jacobians, when $\Gamma$ is neat. We give some examples below. Generally the boundary components are explained by the Deligne decomposition of the corresponding LMHS. Thus instead of Hodge diamond we consider the diamond of the $I^{p,q}$ pieces that appear in the Deligne decomposition of the LMHS.

\begin{example} \cite{K}
Consider the degeneration of the middle cohomology of a fibration of projective curves of genus 2, where the HS has weight 2 and is of type $(1,2,1)$. The example can be illustrated geometrically as possible degeneration in a smooth compact curve with two holes, where each hole can degenerate independently or both may degenerate. In the $I^{p,q}$ coordinates we have 4 possible degenerations with $(p,q)$ be
\begin{itemize}
    \item $2(1,0)+2(0,1)$ where $N=0$
    \item $(1,0+(0,1)$ and $(1,1) \to (0,0)$ for $N_1$.
    \item The same as previous item for $N_2$.
    \item $2(1,1) \to 2(0,0)$ corresponding to $N_1+N_2$.
\end{itemize}
where the multiplicities denote the dimension of the $I^{p,q}$ and $N_1$ and $N_2$ are 
\begin{equation}
N_1=\begin{pmatrix} & & 0 & \\ & & & 0\\0 & & & \\ & 1 & & \end{pmatrix}, \qquad  N_2
=\begin{pmatrix} & & 0 & \\ & & & 0\\1 & & & \\ & 0 & & \end{pmatrix}   
\end{equation}
The underlying vector space has dimension 4 and the group $G$ is $Sp_4$. We have the above 4 boundary components. This example is not CY type. 
\end{example}
\begin{example} \cite{K}
Consider the degeneration of a Hodge structure of weight 2 and of type $(1,n,1)$. The possible $I^{p,q}$ components of the degenerating MHS are 
\begin{itemize} 
\item $(2,0)+n(1,1)+(0,2)$
\item $(1,0)+(0,1)+(2,1)+(1,2)+(n-2)(1,1)$
\item $(0,0)+n(1,1)+(2,2)$
\end{itemize}
where we take $n \geq 2$. The nilpotent transformations $N$ are of type $(-1,-1)$ on each components depending to $n$. Each of the above 3 cases appear as different boundary components of the period domain. 
\end{example}

\begin{example} \cite{K}
Consider the example of a Hodge structure of weight 3 and $\dim V=4, \ h=(1,1,1,1)$, where $D=Sp_4(\mathbb{R})/U(1)^{\times 2}$. We shall also illustrate the $(p,q)$-domain of the Deligne decomposition $V=\oplus_{p,q}I^{p,q}$. In this case the only possibilities for $N$ are 
\begin{equation}
    N_1=\begin{pmatrix} 0&0&0&0\\a&0&0&0\\c&b&0&0\\d&c&-a&0\end{pmatrix}, \qquad N_2=\begin{pmatrix} 0&0&0&0\\0&0&0&0\\0&0&0&0\\a&0&0&0\end{pmatrix} \qquad N_3=\begin{pmatrix} 0&0\\A&0&\end{pmatrix}, \ A^t=A>0
\end{equation}
In this case the LMHS associated to $N_1$ i.e. on the boundary $\overline{B(N_1)}\cong \mathbb{C}^*$ has all 4 nonzero $I^{p,q}$ on the diagonal, $(0,0) \leftarrow (1,1) \leftarrow (2,2) \leftarrow (3,3)$ where the action of $N_1$ is as arrows. From the result of \cite{KP} explained above we have $\overline{B(N_2)}\cong \text{non compact elliptic modular surface}$ and the points have nonzero $I^{p,q}$ on $(3,0), (0,3)$ isolated, and $(1,1) \leftarrow (2,2)$. The boundary associated to $N_3$ correspond to two pure Hodge structure $(2,0) \leftarrow (3,1)$ and $(0,2) \leftarrow (1,3)$, and one has $\overline{B(N_2)}$ is isomorphic to a CM elliptic curve.
\end{example}
\begin{example} \cite{KP}
Consider the HS worked out by Carayol, the $\mathbb{Q}$-vector space of dimension $6$ and the Hodge structure of weight 3 of type $(1,2,2,1)$, where $D=U(2,1)/U(1)^3$. The example given by Carayol considers HS with decomposition $V_{\mathbb{Q}(\sqrt{-d})}=V^+ \oplus V^-$, where $\overline{V^+}=V^-$. There are 3 possible boundary components 
\begin{itemize}
    \item $B(N_1) \cong \mathbb{C}^{\times}$ of points with $I^{p,q}$ to be $(3,2) \to (2,1) \to (1,0)$ and $(2,3) \to (1,2) \to (0,1)$.
    \item $B(N_2) \cong \text{CM elliptic curve}$ of points with $I^{p,q}$ to be $(3,1) \to (2,0)+(1,3) \to (0,2)$ and $(2,1) \to (1,2)$.
    \item $B(N_3) \cong \text{complex conjugate of}{\ B(N_2)}$ of points with $I^{p,q}$ to be $(3,0) + (0,3)$ and $2(2,2) \to 2(1,1)$.
\end{itemize} 
\end{example}

\vspace{0.3cm}

\section{Attractor points and degeneration of Hodge structure}

\vspace{0.2cm}

We consider the fibration over $\mathbb{P}^1$ defined by  
\begin{equation} 
y^2=4x^3-G_2(s,t,u)x-G_3(s,t,u)
\end{equation}
It has singular fibres at $t=0, 1, \infty$, such that $s$ and $u$  are affine variables on a surface. The cohomologies of the smooth fibres build up a VHS of type $(1,1,1,1)$, whose periods are given by hypergeometric functions as 
\begin{equation} 
F(s)={}_nF_{n-1}\left (  _{\ \ 1, 1, ...,  1}^{\rho_1,\rho_2, ..., \rho_n} \big | \ s \right )
\end{equation}
In order to write a basis for the vanishing cohomology we may use the form of solutions near $0$. That is the basis 
\begin{equation}
f_j=\frac{1}{(2i\pi)^j j!} \frac{\partial^j}{\partial \alpha^j}  \Big|_{\alpha=0}f_{\alpha}(t) , \qquad f_{\alpha}=t^{\alpha}F(t), \ j=3,2,1,0   
\end{equation}
We form the vector $\varpi=[f_3, f_2, f_1, f_0]^T$ namely period vector. The Hodge filtration on $H^3(X)$ is given by 
\begin{equation}
\begin{aligned}
F^3&=\langle f_3 \rangle\\
F^2&=\langle f_3 , f_2 \rangle\\
F^1&=\langle f_3 , f_2, f_1 \rangle\\
F^0&=\langle f_3 , f_2, f_1, f_0 \rangle
\end{aligned}
\end{equation} 
over the $\mathbb{C}$. Then we need to write down $\varpi$ in a symplectic basis $(\alpha_1, \alpha_2, \beta_1, \beta_2)$. There is a transition function $S$ such that $\Pi=S \varpi$ is the new period vector in the symplectic basis. then the volume form on the fibres $\Omega_t$ can be written as 
\begin{equation}
    \Omega_t= [\alpha_1, \alpha_2, \beta_1, \beta_2].\Pi  
\end{equation}
If $w \in H^3(X, \mathbb{Z})$ then 
\begin{equation}
    w^{3,0}=\overline{w^{0,3}}=C \Omega_t= \overline{C}\overline{\Omega}
\end{equation}
Therefore $w$ defines an attractor if 
\begin{equation}
\begin{aligned}
&w= [\alpha_1, \alpha_2, \beta_1, \beta_2](C \Pi+\overline{C} \overline{\Pi})\\
&(C \Pi+\overline{C} \overline{\Pi}) \in \mathbb{Z}^4 \setminus 0 \end{aligned}
\end{equation}
The matrix $S$ and also the attractor condition has been studied in \cite{Ya}. The solutions at infinity of the Picard-Fuchs equation can be calculated with similar procedure. In that case the period vector is 
\begin{equation} 
\varpi_{\infty}=\begin{bmatrix}F_4 \\ F_3 \\ F_2 \\ F_1 \end{bmatrix}, \qquad 
F_k({\beta},t)=B_kt^{-\beta}{}_nF_{n-1}^{\alpha}\left (  _{\ \ 1+\beta_k-\beta_1,\ ... \hat{1} ...,\  1+\beta_k-\beta_n}^{\ \ \ \ \ \ \ \ \ \  \rho_k,\ \rho_k,\ ...,\ \rho_k} \big | \ \frac{1}{t} \right )
\end{equation} 
as in Section 2. We have to write $\varpi_{\infty}$ in a symplectic basis and $\Pi_{\infty}=S_{\infty}\varpi$. Thus we have analogous equation for the attractor, \cite{DM, KY, Ya1, Ya2, Ya3, Ya4, Ya5}.

Using a Frobenius method argument we may write the periods of the VHS associated to (14) as $\varpi_j(t)=f_j(t)$, that is the same as the periods are given by the hypergeometric functions and their derivatives. It is convenient to locally write these periods as the sum of multivalued sections as 
\begin{equation}
    \varpi_j=\sum_{l=0}^3{3 \choose j}f_j\log(t)^{3-j}
\end{equation}
which have logarithmic singularities and extend the former solutions over the degeneracies. We wish to write the periods in a symplectic basis, denoted by $\Pi$.  By simple linear algebra we can write  
\begin{equation}
    \begin{bmatrix}\Pi_1 \\ \Pi_2 \\ \Pi_3 \\\Pi_4 \end{bmatrix}=\begin{pmatrix} 1 &0&0&0\\ a_{1,0} &1 &0 &0 \\ a_{2,0} &a_{2,1} &1 &0 \\ a_{3,0} &a_{3,1} & a_{3,2} & 1 \end{pmatrix} \begin{bmatrix}\varpi_1 \\ \varpi_2 \\ \varpi_3 \\ \varpi_4 \end{bmatrix}
\end{equation}
In theoretical Physics they study the matrix $A_{4 \times 4}$ in the above identity in examples as $A=A_{\zeta}A_{\log}$, where $A_{\zeta}$ is a matrix with entries in terms of zeta values, and $A_{\log}$ has formal entries in terms of logarithms $a_{\log}(n)=\frac{n \log n}{2\pi \sqrt{-1}}$, \cite{KY}, see also\cite{Ya1, Ya2, Ya3, Ya4, Ya5}.

The understanding of the exchange matrix $A$ above is through mirror symmetry. The variation of HS of the family of CY 3-folds corresponds to the deformation of a germ of Frobenius algebra constructed from the cohomology ring of the fibre varieties. The product structure on these Frobenius algebra maybe explained by a quantum K\"ahler potential on the moduli of K\"ahler forms on the CY 3-fold. In Physics they refer to it as the K\"ahler prepotential on the K\"ahler side of the mirror. The potential has the form 
\begin{equation}
    \phi=-\frac{1}{6}\phi_{111}q^3-\frac{1}{2}\phi_{011}q^2-\frac{1}{2}\phi_{001}q-\frac{1}{6}\phi_{000}+ \phi^{\text{np}}
\end{equation}
where the coefficients $\phi_{ijk}$ are given by certain triple products of cohomology classes in degree 2, and can be calculated in terms of the derivatives of the potential itself. The variable $q$ correspond to the quantum deformation on the potential. The last term $\phi^{\text{np}}$ contains resummed instanton contributions and thus contains all non-perturbative corrections and when $q \to \sqrt{-1}\infty$ it tends to $0$. The mirror map identifies $t$ with a ratio of periods that in the large complex limit shows logarithmic behavior, i.e $t \sim \log q$ which shows if $q \to \sqrt{-1} \infty$ then $t \to \infty$. This leads to the following formula for the matrix $S$, 
\begin{equation}
S=\text{Const} (2\pi \sqrt{-1})^3\begin{pmatrix} -\frac{1}{3}\phi_{000} &-\frac{1}{2}\phi_{001}&0&\frac{1}{6}\phi_{111}\\ -\frac{1}{2}\phi_{111} &-\phi_{001} &-\frac{1}{2}\phi_{111}&0 \\ 1 &0 &0 &0 \\ 0 &1 & 0 & 0 \end{pmatrix}, \qquad \text{Const} \in \mathbb{Q}^{\times}
\end{equation}
The concept of the large complex structure limit comes from the mirror symmetry and the nature of the mirror map between two mirror CY varieties. The point is, under the mirror map the neighbourhood of zero on the complex side correspond to a neighbourhood of $\sqrt{-1}\infty$ in the mirror family (K\"ahler side), \cite{KY, Ya1, Ya2, Ya3, Ya4, Ya5}.  

The attractor condition may be studied as  a degenerating situation for the cohomology class $w$ to collapse through a family to a class in $H^{3,0} \oplus H^{0,3}$. That is we consider a cohomology class $\mathfrak{w}$ on a thickening of $X$ over a suitable quasiprojective base as $\mathfrak{X} \to S^*$ which restricts to a $w_s$ on the generic fibre $\mathfrak{X}_s \cong X$, and decompose it regarding the Hodge decomposition. The phenomenon can be studied in a universal way over the moduli $M$. The limit definition in a one parameter family of Hodge flags defined by a nilpotent transformation $N \in \text{End}(V)$ can be expressed as
\begin{equation}
\lim_{\text{Im}(z) \to \infty}\exp(-\sqrt{-1}z.N)\left ( \bigoplus_{p+q=3}H^{p,q}(X_z) \right ) \subset H^{3,0} \oplus H^{0,3}
\end{equation}
Because $N^r=0$ for some $r \leq 3$ the exponential series involving $N$ actually has at most 4 terms. having $N$ to be specific while the vectors in the Hodge decomposition are written in terms of periods, the above limit gives equations defining constrains for attractor points. One should note that attractor point must also lie in the integral lattice $H^3(X, \mathbb{Z})$.

The above procedure depends to the choice of the nilpotent transformation $N$. In fact according to the explanation in Section 4 it depends to the nilpotent cone $\sigma$ defining the corresponding boundary component. Therefore one has to consider the associated constrains for the attractor locus on different boundary components separately. We do this in an example below. 

We calculate the aforementioned constrains for the Example 4.3 of the previous section. \begin{itemize} 
\item \textbf{Attractor constrains on the boundary component $\overline{B(N_3)}$}: in this case 
\begin{equation}
\exp(-\sqrt{-1}z.N_3)=I+\sqrt{-1}z.N_3, \qquad N_3^2=0
\end{equation}
It follows that at an attractor point we must have 
\begin{equation}
\lim_{\text{Im}(z) \to \infty} \left ( I+\begin{pmatrix} 0&0\\\sqrt{-1}z.A&0&\end{pmatrix}\right ) \begin{bmatrix} \Pi_1 \\ \Pi_2 \\ \Pi_3 \\ \Pi_4 \end{bmatrix} \ \text{is of type (3,0)+(0,3)}   
\end{equation}
where $A_{2 \times 2}$ is symmetric. Setting $A=\begin{bmatrix} a & b \\ b & d \end{bmatrix}$ we obtain 
\begin{equation}
    \begin{bmatrix} \Pi_1 \\ \Pi_2 \\ \sqrt{-1}az\Pi_1+\sqrt{-1}bz\Pi_2+\Pi_3 \\ \sqrt{-1}bz\Pi_1+\sqrt{-1}dz\Pi_2+\Pi_4 \end{bmatrix} \ \text{is of type (3,0)+(0,3)}
\end{equation}
Therefore we have 
\begin{equation}
\begin{aligned}
&\Pi_2=0 \\
&\lim_{z \to \infty}az\Pi_1+bz\Pi_2+\Pi_3=0\\
&\overline{\Pi_1}=\lim_{z \to \infty}\sqrt{-1}bz\Pi_1+\sqrt{-1}dz\Pi_2+\Pi_4
\end{aligned}
\end{equation}
Here $\Pi$ is the symplectic period vector. \item \textbf{Attractor constrains on the boundary component $\overline{B(N_2)}$}: with the same method we obtain the following equations
\begin{equation}
\begin{aligned}
&\Pi_2=\Pi_3=0 \\
&\overline{\Pi_1}=\lim_{z \to \infty} az\Pi_1+\Pi_3
\end{aligned}
\end{equation}
\item \textbf{Attractor constrains on the boundary component $\overline{B(N_1)}$}: in this case $N_1^4=0$ and $\exp(-\sqrt{-1}z.N_1)=I-\sqrt{-1}z.N_1+z^2N_1^2-\sqrt{-1}z^3N_1^3$ and the condition is
\begin{equation} 
\lim_{z \to \infty} \left ( I-\sqrt{-1}z.N_1+z^2N_1^2-\sqrt{-1}z^3N_1^3 \right ) \begin{bmatrix} \Pi_1 \\ \Pi_2 \\ \Pi_3 \\ \Pi_4 \end{bmatrix} \ \text{is of type (3,0)+(0,3)}
\end{equation} 
Replacing the matrix of $N_1$ from the previous section one can simply form the constrains of attractors that may exist on this component.
\end{itemize}
One has to note that the attractor points are the points in the integral cohomology of $X$ that lie in the locus of these constrains. There may also exists other boundary components associated to the choice of different positive nilpotent cones in the  
Lie algebra of the symmetries of $H^3(X, \mathbb{C})$.

From the nilpotent orbit theorem of W. Schmid near the boundary component associated to $N_i$ we have the vector space identity
\begin{equation}
F^p(t)=e^{t_iN_i}e^{\Gamma(z)}F_0^p
\end{equation}
where $\Gamma(z)$ is a $\mathfrak{sp}(2h^{2,1}+2)$-valued map with $\Gamma(0)=0$ and $F_0^p$ is the limiting Hodge filtration and does not depend to the coordinates $t_i$. The nilpotent orbit theorem further says that the Hodge filtration $F^p$ can be approximated by its nilpotent orbit $F_{\text{nil}}:=e^{t_iN_i}F_0^p$.
The task here is to translate the asymptotic behavior of the Hodge filtration $F^p$ into equivalent statement about the period vector $\Pi$. We can write the period vector 
\begin{equation}
\Pi(t)= e^{t_iN_i}e^{\Gamma(z)}w_0=e^{t_iN_i}(a_0+a_{1,i_1}e^{2 \pi \sqrt{-1}t_{i_r}}+ \dots )
\end{equation}
where we have expanded $\Gamma(z)$ in the variable $z=e^{2\pi \sqrt{-1}t}$. The first term could be thought as a polynomial approximation of the period. The terms starting from the second are instanton terms called non-perturbative corrections. 

In 1-dimensional moduli space $h^{2,1}=1$ case we have 3 kinds of boundaries; the first type of boundary is the conifold point; in this case $\varpi$ is given by 
\begin{equation}
\varpi=\begin{bmatrix} 1+\frac{a^2}{8 \pi}z^2\\ az \\ \sqrt{-1}-\frac{\sqrt{-1}a^2}{8 \pi}z^2\\
 \frac{\sqrt{-1}a}{2 \pi}z\log z\end{bmatrix}
\end{equation}
The second type of boundary limit is the Tyurin degeneration; in this case the period vector can be written as 
\begin{equation}
\varpi=\begin{bmatrix} 1+az\\ \sqrt{-1}-\sqrt{-1}az \\ \frac{\log z}{2 \pi \sqrt{-1}}-\frac{az}{2 \pi \sqrt{-1}}(\log z-2)\\
 \frac{\log z}{2 \pi}-\frac{az}{2 \pi}(\log z-2)
\end{bmatrix}
\end{equation}
The thirs type is large complex limit structure point. This case was discussed before, see also \cite{Ya1, Ya2, Ya3, Ya4, Ya5} for details and examples. In higher dimensional moduli, instead of considering the limit where all the coordinates are sent to the limit at the same rate, we can take ordered limit, such that first $y_1 \to \infty$ and then $y_2 \to \infty$ etc. where $y_1 \gg y_2$ and so on. We obtain a chain of boundary components 
\begin{equation}
I \stackrel{y_1 \to \infty}{\longrightarrow} Type A_{(1)} \stackrel{y_2 \to \infty}{\longrightarrow} Type A_{(2)} \to \dots \stackrel{y_n \to \infty}{\longrightarrow} Type A_{(n)}
\end{equation}
where we have written the variable $t_i=x+\sqrt{-1}y_i$, cf. \cite{BGH01}. 

\section{Relation with GIT and moment map}

Assume we have a symplectic manifold $(M,\omega, J)$ that is $M$ is closed compact K\"ahler manifold with symplectic form $\omega$ and $J$ an integrable complex structure such that $\omega(.,.)$ is a Riemannian metric. Also assume we have an action of a Lie group ${G}_\mathbb{R} \subset U(n)$ on $M$ which preserves all the 3 structures $\omega, J$ and the metric. The Lie algebra $\mathfrak{g}=Lie (G) \subset \mathfrak{u}(n)$ has an infinitesimal action on the vector fields over $M$ written as 
\begin{equation}
\xi \mapsto [X \mapsto X_{\xi}]
\end{equation} 
Equip $\mathfrak{g}$ with an invariant bilinear form such that the $G$-action gets Hamiltonian. If 
\begin{equation} 
\mu:M \to \mathfrak{g}
\end{equation} 
is a moment map for the action. Assume  $X_{\xi}$ is a Hamiltonian vector field such that 
\begin{equation}
L_{X_{\xi}}\omega =dH_{\xi}, \qquad H_{\xi}(.)=\langle \mu(.),\xi \rangle
\end{equation} 
We can write this as 
\begin{equation} 
\langle d\mu(x) v,\xi \rangle =\omega({X_{\xi}}(x),v), \qquad v \in T_xM
\end{equation} 
Let $G_{\mathbb{C}}$ be the complexification of $G$ and $\mathfrak{g}_{\mathbb{C}}=\mathfrak{g}+\sqrt{-1}\mathfrak{g}$ its Lie algebra. Taking the group of holomorphic automorphisms of $X$ we obtain an action of $G_{\mathbb{C}}$ on $X$ and the infinitesimal action gets the form 
\begin{equation}
\xi=\alpha+\sqrt{-1}\beta \longmapsto v_{\xi}=v_{\alpha}+Jv_{\beta}
\end{equation}
where $v_{\alpha}$ is the Hamiltonian vector field of the function $H_{xi}$ and $Jv_{\beta}=\nabla H_{\beta}$ is the gradient vector field of the function $H_{\beta}$. A well known fact is that; two points in the zero set of the moment map are equivalent under $G_{\mathbb{C}}$ if and only if they are equivalent under $G$, see \cite{VRS}. One may express this 
\begin{equation}
M//G \cong M^{ps}/G_{\mathbb{C}}
\end{equation}
where $M//G=\mu^{-1}(0)/G$ and 
\begin{equation}
M^{ps}=\{x \in M| G_{\mathbb{C}}.x \cap \mu^{-1}(0) \ne \emptyset\}
\end{equation} 
These spaces are generally singular. A point $x \in \mu^{-1}(0)$ is regular if and only if its isotropy subgroup $G_x$ is discrete. When $\mu(x)=0$ then the isotropy subgroup $G_{\mathbb{C},x}$ is the complexification of $G_x$. Let $M^s \subset M^{ps}$ be the open subset of points with discrete isotropy $G_{\mathbb{C},x}$. It follows that $M^s//G \cong M^s/G_{\mathbb{C}}$. In general $M^{ps}$ is not an open subset of $M$. In geometric invariant theory one studies $M^{ss}/G_{\mathbb{C}}$ where 
\begin{equation}
M^{ss}=\{x \in M|\overline{G_{\mathbb{C}}.x} \cap \mu^{-1}(0) \ne 0\}.
\end{equation}
see \cite{VRS} for details. 

We consider the function defined by the square of the norm of the moment map, i.e the function 
\begin{equation}
\begin{aligned}
f:& \ 
X \longrightarrow \mathbb{R}\\
f(x)&=\frac{1}{2}|\mu(x)|^2
\end{aligned}
\end{equation}
The gradient of $f$ is given by $\nabla f(x)=J v_{\mu(x)}(x)$. It is convenient to denote $L_x \xi=v_{\xi}(x)$. The integral curves of the differential equation 
\begin{equation} 
(*): \dot{x}=- \nabla f(x), \qquad x(0)=x_0
\end{equation} 
play a crucial role in GIT. We first state the following.
\begin{theorem} [Convergence Theorem] \cite{VRS}
Let $x(t)$ be an integral curve of the equation (*) passing from an initial point $x_0 \in M$. Then the limit $x_{
\infty}=\lim_{t \to \infty}x(t)$ exists. 
\end{theorem}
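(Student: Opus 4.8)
The plan is to combine the classical monotonicity-and-compactness argument for gradient flows with the \L ojasiewicz gradient inequality in order to promote subsequential convergence into convergence of the whole trajectory.

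First I would establish completeness of the flow and monotonicity of $f$. Since $M$ is compact and $f(x)=\tfrac12|\mu(x)|^2$ is smooth, the vector field $-\nabla f$ is complete, so the integral curve $x(t)$ exists for all $t\ge 0$ and remains in $M$. Differentiating along the flow gives
\begin{equation}
\frac{d}{dt}f(x(t))=\langle \nabla f(x(t)),\dot x(t)\rangle=-|\nabla f(x(t))|^2\le 0,
\end{equation}
so $f(x(t))$ is nonincreasing; being bounded below by $0$, it converges to some value $c\ge 0$. Integrating the same identity yields the energy bound $\int_0^\infty|\nabla f(x(t))|^2\,dt=f(x_0)-c<\infty$.

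Next I would study the $\omega$-limit set $\Lambda=\bigcap_{T\ge 0}\overline{\{x(t):t\ge T\}}$. By compactness of $M$ this set is nonempty, compact, connected and invariant under the flow, and a LaSalle-type argument together with the finiteness of the energy integral shows that $\Lambda$ consists entirely of critical points, so that $\nabla f\equiv 0$ and $f\equiv c$ on $\Lambda$. At this stage one knows only that $x(t)$ accumulates on $\Lambda$; the genuinely delicate point, which I expect to be the main obstacle, is to rule out that the trajectory wanders indefinitely along a positive-dimensional stratum of critical points, since for a general smooth $f$ convergence to a single point can fail.

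The decisive ingredient is that $f$ is in fact real analytic: because $G_{\mathbb R}\subset U(n)$ acts by holomorphic isometries of the K\"ahler manifold $M$, the moment map $\mu$ is real analytic, hence so is $f=\tfrac12|\mu|^2$. I would then invoke the \L ojasiewicz gradient inequality: for each critical point $p\in\Lambda$ there exist a neighbourhood $U$, a constant $C>0$ and an exponent $\theta\in(0,\tfrac12]$ with
\begin{equation}
|f(x)-c|^{1-\theta}\le C\,|\nabla f(x)|,\qquad x\in U.
\end{equation}
Combining this with the energy identity $\tfrac{d}{dt}f=-|\nabla f|^2$ produces a differential inequality for $(f(x(t))-c)^{\theta}$ whose integration bounds the arc length, giving $\int_0^\infty|\dot x(t)|\,dt<\infty$. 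A trajectory of finite length is Cauchy, so $x_\infty=\lim_{t\to\infty}x(t)$ exists and lies in $\Lambda$. The care required is to make the \L ojasiewicz estimate uniform as the trajectory enters a neighbourhood of $\Lambda$, which is handled by a covering argument over the compact critical set $\Lambda$, choosing a common exponent $\theta$ and constant $C$; this completes the passage from accumulation to genuine convergence.
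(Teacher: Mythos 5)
Your proposal cannot be checked against a proof in the paper, because the paper gives none: the Convergence Theorem is quoted verbatim from \cite{VRS}, and the only commentary offered is the remark that the existence of the limit ``goes back to the fact that the quadratic function $f$ is a Morse function.'' Your argument is, in substance, the proof from the cited reference itself (monotonicity and the energy identity, a LaSalle analysis of the $\omega$-limit set, then the \L ojasiewicz gradient inequality to convert finite energy into finite arc length and hence Cauchy behaviour), and it is in fact more accurate than the paper's remark: $f=\tfrac12|\mu|^2$ is in general \emph{not} a Morse function, since its critical set typically has positive-dimensional components (for instance $\mu^{-1}(0)$ itself), which is precisely the difficulty you isolate; it is the real analyticity/\L ojasiewicz mechanism, not Morse theory, that rules out the trajectory wandering along such a component. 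The one place where your sketch is too quick is the justification of analyticity: holomorphy of the $G_{\mathbb R}$-action does not by itself make $\mu$ real analytic, because $\mu$ is built from the K\"ahler form and an invariant metric, and a general K\"ahler metric need not be real analytic. In the setting actually relevant here --- $G_{\mathbb R}\subset U(n)$ acting on $\mathbb{P}(V)$ or a $G$-invariant projective subvariety, with the Fubini--Study form and the explicit moment map $\langle\mu([v]),\xi\rangle=a\,\langle v,\sqrt{-1}\xi v\rangle/|v|^2$ appearing later in the same section --- everything is real analytic and your argument goes through; for an abstract compact K\"ahler $M$ one must either assume a real analytic K\"ahler structure or invoke analytic local normal forms for the moment map near the critical set. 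With that caveat filled in, your uniform covering of the compact set $\Lambda$ by \L ojasiewicz neighbourhoods, with a common exponent and constant, and the resulting finite-length conclusion constitute the standard and correct completion of the proof.
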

The existence of the limit in the above theorem is characteristic and goes back to the fact that the quadratic function $f$ is a Morse function. The next proposition asserts that the function $f$ attains its minima along the $G_{\mathbb{C}}$-orbit at the critical points.
\begin{proposition} [Kirwan-Ness Inequality] \cite{VRS}
Let $x \in M$ be a critical point of the moment map squared. Then
\begin{equation}
|\mu(x)| \leq |\mu(gx)|, \qquad \forall g \in G_{\mathbb{C}}
\end{equation} 
\end{proposition}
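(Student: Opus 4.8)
The plan is to recast the statement as the assertion that a critical point $x$ of $f=\tfrac12|\mu(x)|^2$ is a global minimum of $|\mu|$ along its complexified orbit $G_{\mathbb{C}}\cdot x$. First I would record the critical point condition: since the gradient is $\nabla f(x)=Jv_{\mu(x)}(x)$ and $J$ is an isomorphism, $x$ is critical for $f$ if and only if $v_{\mu(x)}(x)=0$. Writing $\beta:=\mu(x)$, this says that the real generator $v_\beta$, and hence also $Jv_\beta=v_{\sqrt{-1}\beta}$, vanishes at $x$, so the whole one-parameter subgroup $\exp(\mathbb{C}\beta)$ fixes $x$ and $\mu$ is constant equal to $\beta$ along it. This fixed-point fact is what will let me evaluate the relevant quantities at a distinguished direction later.

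Next I would cut down the range of $g$. Because $\mu$ is $G_{\mathbb{R}}$-equivariant ($\mu(kx)=\mathrm{Ad}(k)\mu(x)$) and the invariant form on $\mathfrak{g}$ is $\mathrm{Ad}$-invariant, we have $|\mu(kx)|=|\mu(x)|$ for all $k\in G_{\mathbb{R}}$; using the polar decomposition $G_{\mathbb{C}}=G_{\mathbb{R}}\cdot\exp(\sqrt{-1}\mathfrak{g})$ it therefore suffices to bound $|\mu(\exp(\sqrt{-1}\eta)x)|$ below by $|\beta|$ for every $\eta\in\mathfrak{g}$. The analytic engine is the Kempf-Ness convexity along a complexified orbit $y(t)=\exp(\sqrt{-1}t\xi)x$: using the moment map relation $\langle d\mu(v),\xi\rangle=\omega(v_\xi,v)$ together with the fact that $\omega(\cdot,J\cdot)$ is the Riemannian metric, one computes
\[
\frac{d}{dt}\langle\mu(y(t)),\xi\rangle=\|v_\xi(y(t))\|^2\ge 0,
\]
and a second differentiation with the K\"ahler identities shows $t\mapsto\langle\mu(y(t)),\xi\rangle$ is convex. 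This permits the definition of the weight $w(x,\xi):=\lim_{t\to\infty}\langle\mu(\exp(\sqrt{-1}t\xi)x),\xi\rangle\in(-\infty,+\infty]$, with existence of the limit guaranteed by monotonicity.

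The core step is the moment-weight inequality $\inf_{g\in G_{\mathbb{C}}}|\mu(gx)|\ge -w(x,\xi)/|\xi|$ for every $\xi\ne 0$, which I would invoke from \cite{VRS}; this is the genuinely hard part, because $|\mu|^2$ is itself \emph{not} convex along complex orbits, so one cannot conclude by a naive second-variation argument and must instead optimize the linear-in-$\xi$ convex functions above and pass to the limit, the Convergence Theorem ensuring that the gradient-flow limits entering that argument exist. Granting it, I apply it to the distinguished direction $\xi=-\beta$: since $v_\beta(x)=0$ the orbit $\exp(-\sqrt{-1}t\beta)x\equiv x$ is constant, so $\langle\mu(\cdot),-\beta\rangle\equiv -|\beta|^2$ and $w(x,-\beta)=-|\beta|^2$, whence $-w(x,-\beta)/|{-\beta}|=|\beta|=|\mu(x)|$. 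The moment-weight inequality then gives $|\mu(x)|\le|\mu(gx)|$ for all $g\in G_{\mathbb{C}}$, which is exactly the claim. I expect the convexity bookkeeping behind the moment-weight inequality, and the verification that the weight $w(x,\xi)$ behaves well under the Cartan decomposition, to be where the real work concentrates; everything surrounding it is formal manipulation of the moment map identity.
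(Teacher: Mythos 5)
The paper never actually proves this proposition: it is quoted, like the neighbouring Convergence Theorem, Moment Limit Theorem and Moment-Weight Inequality, directly from \cite{VRS}, so there is no internal argument to compare yours against. Judged on its own terms, your reconstruction is correct, and it is in fact the standard derivation in \cite{VRS}: from $\nabla f(x)=Jv_{\mu(x)}(x)=0$ and invertibility of $J$ you get $v_{\beta}(x)=Jv_{\beta}(x)=0$ with $\beta:=\mu(x)$, hence $\exp(\mathbb{C}\beta)$ fixes $x$; the orbit in the direction $\xi=-\beta$ is therefore constant, giving $w(x,-\beta)=-|\beta|^{2}$; and the moment-weight inequality $-w(x,\xi)/|\xi|\le\inf_{g\in G_{\mathbb{C}}}|\mu(gx)|$ then yields $|\mu(x)|\le|\mu(gx)|$ for all $g$. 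A few points. First, handle $\beta=0$ separately (the statement is then trivial), since the moment-weight inequality requires $\xi\neq 0$. Second, your polar-decomposition reduction via $G_{\mathbb{C}}=G_{\mathbb{R}}\cdot\exp(\sqrt{-1}\mathfrak{g})$ is never used afterwards: the moment-weight inequality already quantifies over all of $G_{\mathbb{C}}$, so that paragraph is dead weight. Third, you invoke the inequality in its correct signed form $-w(x,\xi)/|\xi|\le|\mu(gx)|$; this matters, because the paper's own rendering of that theorem, with $|w(x,\xi)|/|\xi|\le|\mu(gx)|$, is not a correct statement (a large positive weight constrains nothing), so citing \cite{VRS} rather than the paper's formula is the right move. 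Fourth, only monotonicity of $t\mapsto\langle\mu(\exp(\sqrt{-1}t\xi)x),\xi\rangle$ is needed to define the weight; your additional claim that this function is itself convex is unnecessary (convexity is a property of the Kempf-Ness primitive, whose derivative this is). Finally, note there is an even shorter route using the paper's Moment Limit Theorem: a critical point is a stationary solution of the negative gradient flow, so $x_{\infty}=x$ and $|\mu(x)|=\inf_{g\in G_{\mathbb{C}}}|\mu(gx)|$ at once; this trades your weight computation for a different, equally deep citation, so neither route is more elementary than the other.
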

If the set of critical points of $f$ is nonempty then it consists of the absolute minima of $f$ and every negative gradient flow converges to a critical point. 
\begin{theorem} [Moment Limit Theorem] \cite{VRS}
Let $x:\mathbb{R} \to M$ be a solution of (*), and set $x_{\infty}=\lim_{t \to \infty}x(t)$. Then 
\begin{equation}
|\mu(x_{\infty})|=\text{inf}_{g \in G_{\mathbb{C}}}|\mu(gx_0)|
\end{equation}
Moreover, the $G$-orbit of $x_{\infty}$ depends to the $G_{\mathbb{C}}$-orbit of $x_0$. 
\end{theorem}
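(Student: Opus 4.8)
The plan is to reduce everything to the two facts already in hand, the Convergence Theorem and the Kirwan--Ness Inequality, together with one structural input from geometric invariant theory. First I would observe that the negative gradient field is itself an infinitesimal $G_{\mathbb C}$-action. Indeed, by the formula $v_{\alpha+\sqrt{-1}\beta}=v_\alpha+Jv_\beta$ recorded above, taking $\alpha=0$ and $\beta=\mu(x)$ gives $Jv_{\mu(x)}(x)=v_{\sqrt{-1}\mu(x)}(x)$, so that
\begin{equation}
-\nabla f(x)=-Jv_{\mu(x)}(x)=v_{-\sqrt{-1}\mu(x)}(x).
\end{equation}
Thus the right-hand side of $(*)$ is at every point tangent to the $G_{\mathbb C}$-orbit through that point, whence the whole trajectory stays inside $G_{\mathbb C}\cdot x_0$ and its limit satisfies $x_\infty\in\overline{G_{\mathbb C}\cdot x_0}$.

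Next I would show that $x_\infty$ is a critical point of $f$. Along the flow one has $\frac{d}{dt}f(x(t))=-|\nabla f(x(t))|^2\le 0$, so $f$ decreases and, being bounded below, converges; since the limit $x_\infty$ exists by the Convergence Theorem and $f$ is real analytic, $\nabla f(x_\infty)=0$. The Kirwan--Ness Inequality then applies at $x_\infty$: it minimizes $|\mu|$ on its own orbit $G_{\mathbb C}\cdot x_\infty$. One half of the desired identity is now immediate, since each $x(t)$ lies in $G_{\mathbb C}\cdot x_0$ and $|\mu(x(t))|$ decreases to $|\mu(x_\infty)|$, giving
\begin{equation}
\inf_{g\in G_{\mathbb C}}|\mu(gx_0)|\le |\mu(x_\infty)|.
\end{equation}

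The reverse inequality, together with the ``moreover'' clause, is the substantive part, and I expect it to be the main obstacle. I would run the flow $(*)$ from an arbitrary point $gx_0$ of the orbit; its limit $y_\infty(g)$ is again critical and satisfies $|\mu(y_\infty(g))|\le |\mu(gx_0)|$ because $f$ decreases. It therefore suffices to prove that all such limits lie on a single $G$-orbit, so that $|\mu(x_\infty)|=|\mu(y_\infty(g))|\le |\mu(gx_0)|$ for every $g$, and taking the infimum closes the loop; this uniqueness is exactly the ``moreover'' statement. To establish it I would invoke the convexity of the Kempf--Ness functional $t\mapsto |\mu(\exp(\sqrt{-1}\,t\xi)\cdot x)|^2$ along the geodesics $\exp(\sqrt{-1}\,t\xi)\cdot x$: convexity upgrades the local Kirwan--Ness minimum to a global minimum of $|\mu|$ on the whole orbit closure $\overline{G_{\mathbb C}\cdot x_0}$, and forces the minimizing locus to be a single closed $G_{\mathbb C}$-orbit meeting each $|\mu|$-level set in one $G$-orbit. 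Every negative gradient trajectory must terminate on this minimizing set, which both identifies $|\mu(x_\infty)|$ with $\inf_{g}|\mu(gx_0)|$ and shows that the $G$-orbit of $x_\infty$ depends only on $G_{\mathbb C}\cdot x_0$. The delicate point throughout is that $x_\infty$ generally lies in the boundary $\overline{G_{\mathbb C}\cdot x_0}\setminus G_{\mathbb C}\cdot x_0$, so one cannot compare $x_\infty$ with $gx_0$ directly on a single orbit; the convexity argument is precisely what bridges the orbit and its closure.
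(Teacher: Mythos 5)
The paper never proves this theorem: it is imported verbatim from \cite{VRS} and used as a black box in the proof of the Main Result, so your proposal has to be measured against the argument in \cite{VRS} itself. Your opening moves are correct and coincide with the skeleton of that argument: the identity $-\nabla f(x)=-Jv_{\mu(x)}(x)=v_{-\sqrt{-1}\mu(x)}(x)$ does show the trajectory of $(*)$ stays in $G_{\mathbb{C}}\cdot x_0$; the limit $x_{\infty}$ is a critical point (once the Convergence Theorem gives existence of the limit, criticality follows from $\int_0^{\infty}|\nabla f(x(t))|^2\,dt=f(x_0)-f(x_{\infty})<\infty$ — real analyticity is not needed for this step); the inequality $\inf_{g\in G_{\mathbb{C}}}|\mu(gx_0)|\le|\mu(x_{\infty})|$ is immediate; and the reverse inequality does reduce, exactly as you say, to showing that the limits of flows started at the various points $gx_0$ all lie on a single $G$-orbit.

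The gap is that this last statement — which, as you yourself note, is precisely the ``moreover'' clause — is never actually proved, and the tool you invoke for it is misstated. The function $t\mapsto|\mu(\exp(\sqrt{-1}\,t\xi)x)|^2$ is \emph{not} the Kempf--Ness functional and is \emph{not} convex in general: the Kempf--Ness function is a primitive of $t\mapsto\langle\mu(\exp(\sqrt{-1}\,t\xi)x),\xi\rangle$, and the correct convexity statement is that this pairing is nondecreasing, its derivative being $|v_{\xi}(\exp(\sqrt{-1}\,t\xi)x)|^2\ge 0$; the full norm $|\mu|^2$ can dip and rise along a geodesic (for $S^1$ acting on $S^2$ with $\mu$ the height function, $|\mu|^2$ along a $\mathbb{C}^{\times}$-orbit is essentially $\tanh^2$, a bounded nonconstant function, hence not convex). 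Furthermore, even the genuine Kempf--Ness convexity yields a uniqueness statement of the kind you need only in the semistable regime, where $\overline{G_{\mathbb{C}}\cdot x_0}\cap\mu^{-1}(0)$ is a single $G$-orbit; the case relevant to this theorem (and to the attractor application) is the unstable one, $\inf_g|\mu(gx_0)|>0$, where the assertion that the $G$-orbit of $x_{\infty}$ depends only on $G_{\mathbb{C}}\cdot x_0$ is one of the principal theorems of \cite{VRS}, proved there by a quantitative analysis of the flow ({\L}ojasiewicz gradient inequality, convergence rates, comparison of the trajectories through $x_0$ and $gx_0$), not by a soft convexity argument. As written, your proof assumes the hardest part of the theorem at its decisive step, so it is circular there.
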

Thus the limits of integral curves of the (*) tend to the minimum of $|\mu(x)|$, \cite{VRS}. We have the following more general theorem.
\begin{theorem} [Moment-Weight Inequality] \cite{VRS}
Define the $\mu$-weight of the pair $(x, \zeta) \in M \times [\mathfrak{g} \smallsetminus 0])$ by 
\begin{equation}
w_{\mu}(x, \zeta):=\lim_{t \to \infty}\langle \mu(\exp(\sqrt{-1}t\zeta)x,\zeta \rangle
\end{equation}
Then this limit exists and defines a $G_{\mathbb{C}}$-invariant function $w_{\mu}$. Moreover, for every $x \in M$ and $\xi \in \mathfrak{g} \smallsetminus 0$ we have 
\begin{equation}
\frac{|w(x,\xi)|}{|\xi|} \leq |\mu(gx)|, \qquad g \in G_{\mathbb{C}}
\end{equation}
If $x_0 \in M$ is such that $\overline{G_{\mathbb{C}}.x_0} \cap \mu^{-1}(0)=\emptyset$ then there exists $\xi_0 \in \mathfrak{g}$ with $|\xi_0|=1$ and 
\begin{equation}
-|w(x_0,\xi_0)|= \text{inf}_g|\mu(gx_0)| =sup_{\xi}\frac{|w(x_0,\xi)|}{|\xi|}, \qquad g \in G_{\mathbb{C}}, \xi \in \mathfrak{g} \smallsetminus 0
\end{equation} 
\end{theorem}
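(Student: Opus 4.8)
The plan is to dispatch the three assertions of the theorem — existence of the limit, invariance of the resulting weight, and the inequality together with its attainment — separately, using as main input the convergence results for the negative gradient flow of $f=\tfrac{1}{2}|\mu|^2$ recorded above.

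First I would prove that the limit defining $w_\mu(x,\zeta)$ exists by a monotonicity argument. The defining relation $\langle d\mu(x)v,\xi\rangle=\omega(X_\xi(x),v)$ exhibits $H_\xi=\langle\mu,\xi\rangle$ as a Hamiltonian for $v_\xi$, so its Riemannian gradient is $\nabla H_\xi=Jv_\xi=v_{\sqrt{-1}\xi}$. Consequently the curve $t\mapsto \exp(\sqrt{-1}t\zeta)x$ is an integral curve of $\nabla H_\zeta$, and therefore
\begin{equation}
\frac{d}{dt}\langle \mu(\exp(\sqrt{-1}t\zeta)x),\zeta\rangle=\omega\big(v_\zeta,Jv_\zeta\big)=\big|v_\zeta(\exp(\sqrt{-1}t\zeta)x)\big|^2\geq 0,
\end{equation}
where I use $\omega(u,Ju)=|u|^2$. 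Hence $t\mapsto\langle\mu(\exp(\sqrt{-1}t\zeta)x),\zeta\rangle$ is non-decreasing; since $M$ is compact $\mu$ is bounded, so this bounded monotone function converges and $w_\mu(x,\zeta)$ is well defined. Because $G_\mathbb{R}$ preserves $\omega$ and $J$, the moment map is equivariant, $\mu(kx)=\mathrm{Ad}^\ast(k)\mu(x)$ for $k\in G_\mathbb{R}$, which immediately yields $w_\mu(kx,\mathrm{Ad}(k)\zeta)=w_\mu(x,\zeta)$; the genuinely $G_\mathbb{C}$-invariant object is the maximal weight $\sup_{\xi\neq 0}-w_\mu(x,\xi)/|\xi|$, whose invariance along the imaginary directions $\exp(\sqrt{-1}\eta)$ I would obtain as a by-product of the inequality below, since that inequality bounds it by the $G_\mathbb{C}$-invariant quantity $\inf_{g}|\mu(gx)|$ and the attainment statement forces the two to coincide.

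Then I would prove the inequality $-w_\mu(x,\xi)/|\xi|\leq |\mu(gx)|$ for all $g\in G_\mathbb{C}$, which is the displayed bound $|w(x,\xi)|/|\xi|\leq|\mu(gx)|$ once the sign normalization is fixed. Using equivariance I would reduce to $g=\exp(\sqrt{-1}\eta)$ and run the one-parameter flow $s\mapsto\exp(\sqrt{-1}s\xi)gx$. The non-decreasing function from the existence step, combined with the pointwise Cauchy–Schwarz estimate $\langle\mu(y),\xi\rangle\leq|\mu(y)|\,|\xi|$, lets me bound the value at finite $s$ by $|\mu(\exp(\sqrt{-1}s\xi)gx)|\,|\xi|$; the delicate point is passing to the limit uniformly in $g$, which rests on the convexity of $s\mapsto\langle\mu(\exp(\sqrt{-1}s\xi)y),\xi\rangle$ along the geodesics of $G_\mathbb{C}/G_\mathbb{R}$, i.e. on the convexity of the Kempf–Ness functional. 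This convexity is the technical heart of the argument and the step I expect to be the main obstacle.

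Finally, for an unstable $x_0$ with $\overline{G_\mathbb{C}\cdot x_0}\cap\mu^{-1}(0)=\emptyset$, I would produce the optimal $\xi_0$ and upgrade the inequality to an equality using the Convergence Theorem and the Moment Limit Theorem already established: let $x_\infty=\lim_{t\to\infty}x(t)$ be the limit of the negative gradient flow of $f$ from $x_0$, so that $|\mu(x_\infty)|=\inf_g|\mu(gx_0)|>0$. At the critical point $x_\infty$ one has $Jv_{\mu(x_\infty)}(x_\infty)=\nabla f(x_\infty)=0$, hence $v_{\mu(x_\infty)}(x_\infty)=0$, and $\xi_0:=-\mu(x_\infty)/|\mu(x_\infty)|$ generates a one-parameter subgroup fixing $x_\infty$. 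Computing $w_\mu(x_0,\xi_0)$ along the flow and matching it against $|\mu(x_\infty)|$ should show that $\xi_0$ attains the supremum and that $\inf_g|\mu(gx_0)|=\sup_{\xi}-w_\mu(x_0,\xi)/|\xi|$. Checking that the gradient flow of $f$ and the flow of $\exp(\sqrt{-1}t\xi_0)$ are asymptotically aligned, so that this matching is exact, is the second substantial point; together with the convexity above it completes the proof along the lines of \cite{VRS}.
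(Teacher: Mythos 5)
The first thing to note is that the paper does not prove this theorem at all: it is quoted (with sign errors, see below) from the reference \cite{VRS} of Georgoulas--Robbin--Salamon and is then used as a black box in the proof of the paper's Main Result. So there is no ``paper proof'' to compare against; your proposal has to be judged against the argument of \cite{VRS} itself, and what you outline is essentially that argument's skeleton. The parts you actually carry out are correct: the monotonicity computation $\frac{d}{dt}\langle\mu(\exp(\sqrt{-1}t\zeta)x),\zeta\rangle=|v_\zeta|^2\ge 0$ plus compactness of $M$ gives existence of the limit; the equivariance $w_\mu(kx,\mathrm{Ad}(k)\zeta)=w_\mu(x,\zeta)$ for $k\in G_{\mathbb{R}}$ is right; and you correctly diagnose that the displayed inequalities in the statement only make sense after replacing $|w|$ by $-w$ (as written, $-|w(x_0,\xi_0)|\le 0$ cannot equal $\inf_g|\mu(gx_0)|>0$ at an unstable point).

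However, the two steps you defer as ``the technical heart'' are not technicalities; they are the theorem, and the proposal leaves both unproved. (i) For the inequality, monotonicity plus Cauchy--Schwarz gives only $-w_\mu(x,\xi)\le|\mu(x)|\,|\xi|$, i.e.\ the case $g=e$; to reach arbitrary $g\in G_{\mathbb{C}}$ one needs either the convexity of the Kempf--Ness functional along geodesics of $G_{\mathbb{C}}/G_{\mathbb{R}}$ together with independence of the asymptotic slope from the base point, or, as in \cite{VRS}, the invariance of the weight under the parabolic subgroup $P(\xi)=\{g:\lim_{t\to\infty}\exp(\sqrt{-1}t\xi)\,g\,\exp(-\sqrt{-1}t\xi)\ \text{exists}\}$ combined with the decomposition $G_{\mathbb{C}}=G_{\mathbb{R}}\cdot P(\xi)$; your text names the convexity but supplies neither it nor the uniformity-in-$g$ limit argument it is meant to justify. (ii) For the attainment, your candidate $\xi_0=-\mu(x_\infty)/|\mu(x_\infty)|$ is the right one, but the identity you can verify, namely $-w_\mu(x_\infty,\xi_0)=|\mu(x_\infty)|$ (using $v_{\mu(x_\infty)}(x_\infty)=0$, so $\exp(\sqrt{-1}t\xi_0)$ fixes $x_\infty$), is at the wrong point: $x_\infty$ lies in $\overline{G_{\mathbb{C}}x_0}$ but in general not in $G_{\mathbb{C}}x_0$, and $w_\mu(\cdot,\xi_0)$ is not continuous, so transferring the equality from $x_\infty$ to $x_0$ is exactly the hard Kempf--Ness step; in \cite{VRS} it rests on {\L}ojasiewicz-type estimates showing the negative gradient flow $g(t)x_0$ is asymptotic to the one-parameter orbit $\exp(\sqrt{-1}t\xi_0)x_0$, and that analysis occupies the bulk of their paper. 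As it stands, your proposal is a correct road map with the two essential lemmas missing.
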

Consider the case $M=\mathbb{P}(V)$ or when $M$ is a $G$-invariant quasiprojective subvariety of $\mathbb{P}(V)$. A candidate for the moment map of the action of $G$ is defined by 
\begin{equation}
\langle \mu(x), \xi \rangle =a \times \frac{\langle v, \sqrt{-1}\xi v \rangle}{|v|^2}, \qquad x=[v] \in \mathbb{P}(V), \ a \in \mathbb{R}^+
\end{equation}
where $a>0$ maybe chosen arbitrary. This follows from the definition of the moment map at the beginning of this section. In this case the moment-weight function $w(x, \xi)$ has simple explanations in terms of the eigenvalues of the matrix of the operator $\sqrt{-1}\xi$, \cite{VRS}. The functions $f$ in (*) and $w(x, \xi)$ may be interpreted as certain quadratic functions defined by a bilinear pairing, which have Morse property. Their critical values naturally appear to have the maximum or minimum properties as the reader may expect from differential geometry.

Our purpose is to provide another interpretation of the attractor locus in terms of the critical locus of the absolute value of a moment map squared. We come back to the functions $Z_{\gamma}$ defined by (49). Recall also that the attractor points can was explained by the critical locus of the gradient flow of $|Z_{\gamma}|$. We shall consider the function 
\begin{equation}
Z_{\gamma}=\frac{Q^t \Sigma \Pi}{(\Pi^{\dagger}\Sigma \Pi)^{1/2}} = \langle \mu(Q), \Sigma \rangle 
\end{equation}
as the equation defining the moment map. We have the following result.

\begin{theorem} [Main Result]
Assume the moment map $\mu:M \to \mathfrak{g}=Lie(Aut(\Sigma)_{\mathbb{C}})$ is defined by the equation (110). The attractor points on the moduli of complex structure $M$, are the critical points of the negative gradient flows of $-\nabla |Z_{\gamma}|$. The absolute value of the momentum at the attractor points can be defined by the moment-weight function $w(Q_0,\Sigma)$ for an unstable point $Q_0$. 
\end{theorem}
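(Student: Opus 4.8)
The plan is to interpret the attractor functional $|Z_\gamma|$ through the moment map of Section 6 and then read off both assertions from the GIT theorems already stated there. First I would fix a projective model: via the period map $M\to \Gamma_{\mathbb{Z}}\backslash D\hookrightarrow \mathbb{P}(V_{\mathbb{C}})$ the complex-structure moduli space is realized as a $G$-invariant subvariety of $\mathbb{P}(V_{\mathbb{C}})$, with $G_{\mathbb{R}}=Aut(\Sigma,\mathbb{R})=Sp(2h^{2,1}+2,\mathbb{R})$ acting and preserving the polarization. A point is given by the period vector $\Pi$, and the polarization furnishes the Hermitian norm $|\Pi|^2=-\sqrt{-1}\,\Pi^\dagger\Sigma\Pi$. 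Substituting $v=\Pi$ into the candidate moment map
\begin{equation}
\langle \mu(x),\xi\rangle = a\,\frac{\langle v,\sqrt{-1}\xi v\rangle}{|v|^2},\qquad x=[v]\in\mathbb{P}(V_{\mathbb{C}}),
\end{equation}
and pairing against the distinguished element $\Sigma$ associated to the charge $\gamma$ recovers equation (110), namely $\langle\mu(Q),\Sigma\rangle=Z_\gamma$; thus $Z_\gamma$ is precisely the Hamiltonian $H_\Sigma=\langle\mu(\cdot),\Sigma\rangle$ generated by $\Sigma$.

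Granting this identification, the first assertion follows. The attractor flow of Section 3 is the Weil--Petersson gradient flow of $|Z_\gamma|$, while the flow $(*)$ of Section 6 is the gradient flow of $f=\tfrac12|\mu|^2$; since $|Z_\gamma|=|H_\Sigma|$ agrees with the component of $|\mu|$ in the $\Sigma$-direction, the chain rule shows that the two flows share the same critical set away from the zero locus. By the Convergence Theorem every such trajectory converges, and by the Kirwan--Ness Inequality together with the Moment Limit Theorem its limit minimizes $|\mu|$ along the $G_{\mathbb{C}}$-orbit. As attractors were characterized in Section 3 precisely as the minimizers of $|Z_\gamma|$, the critical points of $-\nabla|Z_\gamma|$ are exactly the attractor points.

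For the second assertion I would invoke the Moment--Weight Inequality. For a charge $Q_0$ whose orbit closure avoids $\mu^{-1}(0)$, i.e. an unstable point, that theorem yields a distinguished direction $\xi_0$ with
\begin{equation}
-|w(Q_0,\xi_0)| = \inf_{g\in G_{\mathbb{C}}}|\mu(gQ_0)| = \sup_{\xi}\frac{|w(Q_0,\xi)|}{|\xi|}.
\end{equation}
Identifying the optimal direction $\xi_0$ with $\Sigma$ then expresses the momentum $|\mu|$ at the limiting attractor through the moment--weight function $w(Q_0,\Sigma)$, which is the stated conclusion and supplies the promised GIT interpretation of the attractor value.

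The hard part will be making the identification in the first paragraph fully rigorous. Two issues demand care: the double role of $\Sigma$, which is simultaneously the symplectic form cutting out $G_{\mathbb{R}}$ and the Lie-algebra element against which $\mu$ is paired, so that one must use the complex structure $J$ and the polarization to produce a genuine element of $\mathfrak{g}$ realizing $Z_\gamma=\langle\mu(Q),\Sigma\rangle$; and the comparison of the Weil--Petersson metric on $M$ with the K\"ahler metric induced by the projective embedding, which is what allows the two gradient flows to coincide rather than merely to share critical points. Establishing these compatibilities carries essentially all of the analytic content, after which both assertions reduce to the quoted theorems of Section 6.
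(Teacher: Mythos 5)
Your proposal follows essentially the same route as the paper's own proof: identify $Z_\gamma=\langle\mu(Q),\Sigma\rangle$ as the moment-map pairing of equation (110), then obtain the first assertion from the Kirwan--Ness inequality together with the Convergence and Moment Limit theorems, and the second assertion from the Moment--Weight inequality applied to an unstable $Q_0$ in the direction $\Sigma$. If anything, you are more careful than the paper, whose proof is a bare citation of those theorems; the compatibility issues you flag at the end (the double role of $\Sigma$ as polarization and as Lie-algebra element, and the comparison of the Weil--Petersson metric with the Fubini--Study-type metric from the projective embedding) are passed over silently in the paper's argument.
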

\begin{proof}
By Theorem 7.2 the critical points $Q_0^t$ of the flow of $-\nabla |Z_{\gamma}|$ is characterized by the inequality $|\mu(Q_0)| \leq |\mu(g.Q_0)|$ where $g$ runs through the symmetry group of $\Sigma$ as a bilinear form, i.e. $G=Aut(\Sigma)_{\mathbb{C}}$. Theorem 7.3 ensures that the critical points are the limit points on the integral curves of $-\nabla |Z_{\gamma}|$. We can define the function 
\begin{equation} 
w(x,\Sigma)=\lim_t \langle \mu(\exp (\sqrt{-1}t\Sigma)x, \xi \rangle
\end{equation} 
Theorem 7.4 applies here and we obtain the result. 
\end{proof}
The above theorem gives an alternative description for attractor points on the moduli space $M$. In this way some of the properties of these points maybe understood from the GIT analysis of the moment maps. 
The moment map plays a crucial role in the study of semistable (resp. stable) locus in  symplectic geometry. 
\vspace{0.3cm}

\section{Relation with CY modularity conjecture}

\vspace{0.3cm}

The section is expository to express the relation of attractor CY varieties to the famous modularity conjecture. Some relevant references are \cite{COFKM, CO, COES, CGP, Del, DK, DSh, KY, KNY, M, Mi, S, Schu, Si1, Si2, ShI, Ya1, Ya2, Ya3, Ya4, Ya4, Ya5, Yu2, Yu3}. If $X$ is defined over $\mathbb{F}_p$, then the Artin Zeta function of $X$ is defined by
\begin{equation}
Z(X/\mathbb{F}_p,t)=\exp \left (\sum_n \frac{| X(\mathbb{F}_{p^n})|}{n}t^n \right )
\end{equation}
where $| X(\mathbb{F}_{p^n})|$ denotes the number of the points in the reduction of the algebraic variety over $\mathbb{F}_{p^n}$. Dwork conjectured that the generating function $Z$ is rational. Later Deligne proved this conjecture in the framework of Weil conjectures. first note that the Lefschetz fixed point theorem states that if $f:X \to X$ is a continuous endomorphism of quasi-projective variety $X$, then 
\begin{equation}
\Gamma_f.\Delta=\sum (-1)^iTr(f^*|_{H^i(X,\mathbb{Q}_l)})
\end{equation}
where $\Gamma$ is the diagonal in $X \times X$. It follows that $\Gamma_{\text{Frob}^r}.\Delta=|X(\mathbb{F}_{q^r})|$. It follows that 
\begin{equation}
\det(1-\text{Frob}_{\mathfrak{p}}|_{H^*(\overline{X_{et}}, \mathbb{Q}_l)})=\exp(\sum_{r=0}^{\infty}|X(\mathbb{F}_{q^r})|\frac{t^r}{r} )
\end{equation}
Assume $X$ is a non-singular projective variety defined over the number field $K$, define the $L$-series of $X$ by 

\begin{equation}
L(X,s)=\prod_i \prod_{\mathfrak{p} \ne \infty} \det(1-\text{Frob}_{\mathfrak{p}}N\mathfrak{p}^{-s}|_{H_{et}^i(\overline{X},\mathbb{Q}_p)^{I_{\mathfrak{p}}}})^{{(-1)}^{i+1}}
\end{equation}
where $N\mathfrak{p}=|\kappa(\mathfrak{p})|$ is the order of the residue field of the $K$ at $\mathfrak{p}$. The modularity conjecture asserts that, $L(X,s)=L(f,s)$ for a Hecke eigenform $f$. This means that if $L(X,s)=\sum a_nn^{-s}$ and $f=\sum c_nq^n$ is the $q$-expansion of the modular form $f$, then $a_n=c_n$. In other words $L(X,s)$ is the $L$-function of the modular form $f$.

\begin{theorem} (Weil conjectures-P. Deligne)
\begin{itemize}
\item[(1)] $Z(X,t)$ is a rational function of $t$ and can be written 

\begin{equation}
Z(X,t)=\frac{P_1(t)...P_{2n-1}(t)}{P_0(t)...P_{2n}(t)}, \qquad n=\dim X, \ P_i(0)=1
\end{equation}

\noindent
where $P_i(t)$ is a polynomial of degree $\beta_i$ the $i$-th Betti number of $X$. Moreover $P_0(t)=1-t$ and $P_{2n}(t)=1-q^n.t$. 
\item[(2)] One has a functional equation

\begin{equation}
Z(X,\frac{1}{q^n.t})=\pm q^{n \chi /2} \ t^{\chi}Z(X,t), \qquad \chi=\sum_{i=0}^{2n}(-1)^i\beta_i
\end{equation}
\item[(3)] The polynomials $P_i(t)$ above have integral coefficients and if 

\begin{equation}
P_i(t)=\prod_i(1-w_{\alpha}.t)
\end{equation}

\noindent
the complex numbers $w_{\alpha}$ have absolute value $q^{1/2}$.
\end{itemize}
\end{theorem}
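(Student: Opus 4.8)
The plan is to pass to $\ell$-adic \'etale cohomology $H^i_{\mathrm{et}}(\overline{X},\mathbb{Q}_\ell)$ for a prime $\ell\neq p$, and to take the Grothendieck-Lefschetz trace formula (113) as the starting point. Setting $P_i(t)=\det\bigl(1-t\cdot\mathrm{Frob}\,\big|\,H^i_{\mathrm{et}}(\overline{X},\mathbb{Q}_\ell)\bigr)$ and applying (113) to every power $\mathrm{Frob}^r$, one exponentiates the resulting logarithmic-derivative identity to obtain $Z(X,t)=\prod_{i=0}^{2n}P_i(t)^{(-1)^{i+1}}$. This at once gives the rational shape in part (1), with numerator $P_1(t)\cdots P_{2n-1}(t)$, denominator $P_0(t)\cdots P_{2n}(t)$, and $\deg P_i=\dim_{\mathbb{Q}_\ell} H^i=\beta_i$. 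For the two extreme factors I would use that $H^0$ is one-dimensional with Frobenius acting as the identity, giving $P_0(t)=1-t$, and that the trace map identifies $H^{2n}$ with the Tate twist $\mathbb{Q}_\ell(-n)$, on which Frobenius acts by $q^n$, giving $P_{2n}(t)=1-q^n t$.

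For part (2) the single input is Poincar\'e duality: the cup-product pairing $H^i\times H^{2n-i}\to H^{2n}\cong\mathbb{Q}_\ell(-n)$ is perfect and Frobenius-equivariant up to the twist $q^n$. Hence if $\{\alpha_{i,j}\}$ are the Frobenius eigenvalues on $H^i$, then $\{q^n/\alpha_{i,j}\}$ are exactly those on $H^{2n-i}$. Substituting $t\mapsto 1/(q^n t)$ in $Z(X,t)=\prod P_i(t)^{(-1)^{i+1}}$ and using this involution on reciprocal roots yields the functional equation, with the sign and the factor $q^{n\chi/2}t^{\chi}$, $\chi=\sum_i(-1)^i\beta_i$, falling out of counting the reciprocal roots interchanged by $\alpha\mapsto q^n/\alpha$. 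For integrality and independence of $\ell$ in part (3), I would combine the a priori rationality $Z(X,t)\in\mathbb{Q}(t)$ — which also follows from Dwork's $p$-adic argument recalled in the text — with the factorization above: uniqueness of the factorization of $Z$ into a product of the prescribed form forces the $P_i$ to have rational coefficients independent of $\ell$, and since their reciprocal roots are algebraic integers, each $P_i$ lies in $\mathbb{Z}[t]$.

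The statement that the reciprocal roots $w_\alpha$ attached to $H^i$ satisfy $|w_\alpha|=q^{i/2}$ under every complex embedding is the Riemann Hypothesis over finite fields, and this is where the full depth of the theorem lies; it is the step that cannot be shortcut. Here I would follow Deligne's strategy: fiber $X$ in a Lefschetz pencil to reduce the bound to the primitive middle-dimensional cohomology of the fibers, study the corresponding lisse $\mathbb{Q}_\ell$-sheaf on the open base of the pencil together with its geometric monodromy (large monodromy via Picard-Lefschetz theory), and read off a first weight estimate from the holomorphy and the location of the zeros and poles of the associated $L$-functions. The decisive sharpening is the Rankin-type trick of applying this estimate to high even tensor powers of the sheaf: this improves the crude bound $|w_\alpha|\le q^{(i+1)/2}$ to $|w_\alpha|\le q^{i/2}$ in the limit, and Poincar\'e duality supplies the reverse inequality, forcing equality; the weights in degrees away from the middle then propagate by the weak Lefschetz theorem together with part (2). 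The formal manipulation of the trace formula and of Poincar\'e duality is routine; the genuine obstacle, and the heart of Deligne's contribution, is the monodromy and positivity analysis underlying this archimedean estimate.
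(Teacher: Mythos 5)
The paper itself offers no proof of this statement: it is quoted as a classical theorem (the Weil conjectures, established by Dwork, Grothendieck and Deligne), and the only argumentative content in the surrounding text is the derivation of rationality from the Lefschetz fixed point formula applied to powers of Frobenius, together with the resulting identity between $\det(1-\mathrm{Frob}\,t\,|\,H^*)$ and the exponential generating function --- which is exactly the content of your first paragraph. Your outline is the standard and correct one: parts (1) and (2) do follow formally from the Grothendieck--Lefschetz trace formula, the computation of $H^0$ and $H^{2n}$, and Frobenius-equivariant Poincar\'e duality, as you describe; and your account of part (3) --- Lefschetz pencils, large geometric monodromy via Picard--Lefschetz theory, the Rankin-style estimate on even tensor powers, and propagation of the weight bounds by weak Lefschetz and duality --- is a faithful summary of Deligne's Weil I argument, which you rightly identify as the step carrying all the depth. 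So there is no divergence from the paper to report; rather, your proposal supplies (at the level of a roadmap) the proof that the paper omits by citation, and its first part coincides with the paper's brief pre-theorem discussion. The one caveat is that what you have written is a proof scheme, not a proof: the monodromy and positivity analysis in your final paragraph is precisely where the difficulty is concentrated, and it cannot be reconstructed from the sketch alone without consulting Deligne's original argument --- but for a theorem of this stature, cited in an expository section, that is the appropriate level of detail.
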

Assume $X$ is a CY 3-fold, the most interesting cohomology of $X$ is the Galois module $H^3(\overline{X}, \mathbb{Q}_l)$. For this we may consider the $L_p$-factor  
\begin{equation} 
L_p(X,s)=(*)\prod_{p \ \text{good}}P_3(X,p^{-s})^{-1}
\end{equation}
where a good prime means the primes that the reduction of $X$ at $p$ is nonsingular and $(*)$ is a finite product of Euler factors at the bad primes. The attractor formalism suggests to look for a decomposition of the motive of $H^3(X, \mathbb{Q}_l)$ in the form $M_{\text{flux}} \oplus M_{\text{rest}}$ where $M_{\text{flux}}$ is of Hodge type $(2,1)+(1,2)$ and $M_{\text{rest}}$ is of Hodge type $(3,0)+(0,3)$. When looking at the mirror CY we have the total Hodge type of $H^3$ of the form $(1,1,1,1)$, i.e. a $\mathbb{Q}$-vector space of dimension $4$. It follows that one may consider the above two pieces as a motive of an elliptic curve, where the modularity conjecture has a solution. In this case the characteristic polynomial of the $Frob_p$ on $H^3$ which is a polynomial of degree $4$, is decomposed as the product of two polynomials of degree $2$. Therefore a premier step in checking the modularity conjecture is to look for primes $p$ where the characteristic polynomial $P_3(X,t)$ can be written as the product $P_3^{(1)}P_3^{(2)}$ of two polynomials of degree $2$. On the other hand one can read the coefficient of the potential modular form satisfying the modularity question from the polynomials $P_3^{(1)}$ and $P_3^{(2)}$. It follows that with a bit of chance one may be able to find out a positive answer to the modular CY 3-fold from the table of known modular forms in a specific example, see \cite{COES}.

\begin{remark} 
We shall be dealing with projective CY hypersurface $X$ in $\mathbb{P}(k_1, \dots, k_5)$ given by equations of the form
\begin{equation}
\sum_{i=1}^5 x_i^{K/k_i}-\phi_Ax^A=0
\end{equation}
where $x^A$ are monomials of total weight $K$ and $\phi_A$ parameterize $h^{2,1}$-dim complex structure moduli. The Hasse zeta function of $X$ can be written as $\zeta(X,t)=R(t)/D(t)$, where we are mainly interested to the numerator $R(t)$, which is a polynomial of degree $2+h^{2,1}(X)$. The mirror $Y$ of $X$ has an equation in the form 
\begin{equation}
\sum_{i=1}^5 y_i^{\tilde{K}/\tilde{k}_i}-\phi_{\tilde{A}}x^{\tilde{A}}=0
\end{equation}
according to a construction by Greene-Plesser. We may write the $\zeta$ function of $Y$ as $\zeta(Y,t)=R'(t)/D'(t)$ and $R'(t)$ is a polynomial of degree $2+h^{2,1}(Y)=2+h^{1,1}(X)$. According to the Greene-Plesser construction from the beginning we may write the equation of $X$ in the form 
\begin{equation}
\sum_{i=1}^5 x_i^{K/k_i}-\psi_{\bar{A}}x^{\bar{A}}-\phi_Ax^A=0
\end{equation}
where $\bar{A}$ runs over $h^{1,1}(X)$ parameters and $A$ over $h^{2,1}-h^{1,1}$. In this case the observation is that $R(t)=R'(t)R_1(t)$, see the Appendix in \cite{COES}. 
\end{remark}

\begin{example} \cite{COES, M}
The working example studied in \cite{COES} is the following octic 
\begin{equation} \label{ex:octic}
x_1^8+x_2^8+x_3^4+x_4^4+x_5^4-8\psi  x_1x_2x_3x_4x_5-2\phi x_1^4x_2^4=0
\end{equation}
in $\mathbb{P}(1,1,2,2,2)$. It is known that this model particularly admits supersymmetric flux compactification. Its Hasse zeta function has been computed for small primes. The generic fibre of the this family of CY-varieties has Hodge numbers $h^{1,1}=2, h^{2,1}=86$. The generic fibre in the mirror family has Hodge
structure of type $(1,1,1,1)$. We are going to check out the modularity conjecture of CY 3-folds for specific fibres and specially we are interested to the factorization of of zeta function. The first interesting CY variety is the fibre at $\psi=\phi=0$. One can check that for primes $p=3,5,7,11, 13, 17$ 
the characteristic polynomial of the $Frob_p$ acting on $H_{et}^3$, mentioned by  $P_3(X,t)$ factors as the product of two polynomials of degree 2. The corresponding coefficients of the $L$-function are as follows 
\begin{equation}
c_3=0, c_5=2, c_7=0, c_{11}=0, c_{13}=-6, c_{17}=2
\end{equation}
Using the table of modular functions one finds that there is a unique $wt=2$ Hecke eigenform with these coefficients. It is easy to check that this modular form will fill out the modularity conjecture for the CY variety at $\phi=\psi=0$ of the family  \eqref{ex:octic}. Lets add that the degeneration of Hodge structure at $(0,0)$ depends to the way of tending to this point in the 2-plane. The fibre at $\psi=0, \phi=1/97$ is another modular CY 3-fold. The next modular fibres along the line $\psi=0$ appear at $\phi=1/2, 3/5, 2$. The details can be found in \cite{COES}.
\end{example}
\noindent 
The above kind of adjustment depends to a version of Hodge conjecture and the Grothendieck conjecture D. The Grothendieck conjecture D for $X$ asserts that if an algebraic cycle $\gamma \in CH^d(X)$ is numerically equivalent to $0$ then its Poincare dual $cl(\gamma)$ is $0$ in the cohomology group $H^{2d}(X)$. The point is on the existence of the decomposition of the motive of the variety $X$ which we denote by $M$ as $M=M_{flux} \bigoplus M_{rest}$. The aforementioned decomposition on the realization $H^3(X)$ could be lifted to the motive itself if and only if the Hodge conjecture and the Grothendieck conjecture D hold. This assumption imply that the realization functor factorizes through $M_{num}$ [the category of motive under numerical equivalence]. The method of checking with evidences above for modularity has been applied to several examples in \cite{CGP, COES, KNY, M} and results  attractor CY 3-folds which are modular. 
There is one more point that we have to mention that, the decomposition of the motive $M=M_{flux} \bigoplus M_{rest}$ usually holds over a finite extension of $\mathbb{Q}$. This fact also applies to the attractor formalism. We state this in the following conjecture.

\begin{conjecture} (Attractor conjecture) \cite{M} \label{thm:AttConj}
Assume $X$ is a polarized CY 3-fold and $\gamma \in H_3(X, \mathbb{Z})$ defines an attractor point in the moduli of complex structure $M$ for $X$. Then
\begin{itemize}
\item the period vector has coordinates  valued in a number field $K(\gamma)$. In other words regarding as a point in a projective space it lies in $\mathbb{P}^{h^{2,1}}(K(\gamma))$.   
\item the corresponding variety $X_{\gamma}$ is arithmetic, i.e. is defined over a number field $K'(\gamma)$. In other words there is an embedding $M \hookrightarrow \mathbb{P}^N$ whose image is defined over the number field $K'(\gamma)$. 
\item the aforementioned embedding maybe chosen so that $K'(\gamma)K(\gamma)$ is Galois over $K$.
\end{itemize}
\end{conjecture}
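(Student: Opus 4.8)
The plan is to pass to the mirror family, where $H^3$ carries Hodge type $(1,1,1,1)$, and to reformulate the attractor condition $\gamma \perp H^{2,1}$ as a splitting of the polarized weight-$3$ $\mathbb{Q}$-Hodge structure. Because $\gamma$ is integral and $Q(\gamma, H^{2,1})=0$, its Hodge components in $H^{2,1}$ and $H^{1,2}$ vanish, so $\gamma \in H^{3,0}\oplus H^{0,3}$ already at the rational level. The smallest $\mathbb{Q}$-sub-Hodge structure $V_{\mathrm{rest}}$ containing $\gamma$ therefore has complexification containing both $\Omega$ and $\overline{\Omega}$, and in the rank-$2$ attractor situation relevant to the paper's examples one gets an orthogonal decomposition
\begin{equation}
H^3(X,\mathbb{Q}) = V_{\mathrm{flux}} \oplus V_{\mathrm{rest}}, \qquad \langle \gamma \rangle \subset V_{\mathrm{rest}},
\end{equation}
where $V_{\mathrm{rest}}$ is a rank-$2$ piece of pure type $(3,0)+(0,3)$ and $V_{\mathrm{flux}}$ carries the $(2,1)+(1,2)$ part. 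First I would show that $V_{\mathrm{rest}}$ is automatically of CM type: a rank-$2$ polarized $\mathbb{Q}$-Hodge structure with no $(p,p)$-component has Mumford--Tate group contained in a torus, so its endomorphism algebra is a CM (imaginary quadratic) field, which is the natural candidate for $K(\gamma)$ in the first bullet.

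For the first bullet I would exploit that a rank-$2$ CM Hodge structure of type $(3,0)+(0,3)$ is, up to Tate twist, the Hodge structure of a CM elliptic curve, whose periods are algebraic. Writing the normalized period vector $\Pi$ in the symplectic basis of Section~5 and feeding in the explicit hypergeometric periods, the attractor constraints derived there for the boundary components $\overline{B(N_i)}$ force the ratios of the $\Pi_i$ to satisfy algebraic relations with coefficients in $K(\gamma)$. This places the point $[\Pi] \in \mathbb{P}^{h^{2,1}}(K(\gamma))$, giving the first bullet.

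The \emph{main obstacle} is the second bullet, arithmeticity of $X_\gamma$ itself, since Hodge-theoretic CM does not by itself descend the variety to a number field. Here I would proceed in three steps. First, lift the splitting $H^3 = V_{\mathrm{flux}}\oplus V_{\mathrm{rest}}$ from cohomology to a decomposition $M = M_{\mathrm{flux}}\oplus M_{\mathrm{rest}}$ of the motive; as flagged in Section~7 this requires the Hodge conjecture together with Grothendieck's conjecture D, so that the Hodge projectors are realized by absolute (algebraic) cycles and the realization functor factors through numerical motives. Second, interpret the attractor point as a special point of the Mumford--Tate domain and run an André--Oort / reciprocity-type argument: once the splitting is motivic, the CM torus governs the point and forces it to be defined over $\overline{\mathbb{Q}}$, with the precise field dictated by the reciprocity law of $K(\gamma)$. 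Third, invoke a Torelli-type injectivity of the period map for the family of Section~2 to transport the arithmeticity of the motivic data back to the variety, producing an embedding $M \hookrightarrow \mathbb{P}^N$ whose image is cut out over a number field $K'(\gamma)$. The genuine difficulty is that the moduli of CY $3$-folds is not a Shimura variety, so the classical CM-descent theorems do not apply off the shelf; the argument is conditional on the standard conjectures precisely at the passage from the CM Hodge structure to the CM motive.

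For the third bullet I would choose the projective embedding equivariantly for the action of $\Gal(\overline{\mathbb{Q}}/K)$ on the CM data. The reciprocity law for $K(\gamma)$ realizes the Galois action both on the periods (the first bullet) and on the arithmetic model of $X_\gamma$ (the second bullet) through the \emph{same} CM torus, so the compositum $K'(\gamma)K(\gamma)$ is stable under this action and the embedding can be normalized over it, yielding a Galois extension of $K$. Functoriality of the CM construction together with the compatibility of the two reciprocity laws makes this last step formal once the second bullet is established; thus the entire difficulty is concentrated in proving the motivic lifting and the special-point descent.
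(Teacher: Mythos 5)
You are attempting to prove something the paper does not prove and cannot prove: this statement is recorded in the paper purely as a \emph{conjecture} (Moore's attractor conjecture, cited to \cite{M}), stated after an expository discussion of modularity, and it is open. The surrounding text of Section 7 even flags that the motivic decomposition $M = M_{flux} \oplus M_{rest}$ it alludes to is conditional on the Hodge conjecture and Grothendieck's conjecture D. Your proposal is therefore not comparable to any argument in the paper, and on its own terms it is not a proof either: it is a strategy that is explicitly conditional on the Hodge conjecture and conjecture D, and additionally on an Andr\'e--Oort/reciprocity step and a Torelli-type step that you do not supply and that are themselves unavailable --- as you yourself concede, the complex structure moduli space of a CY 3-fold is not a Shimura variety, so no existing CM-descent or special-point theorem applies there.

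Beyond the conditionality, two concrete steps in your outline fail. First, the reduction to a rank-two splitting: a general attractor point supplies exactly one integral class $\gamma \in H^{3,0}\oplus H^{0,3}$, and the minimal rational sub-Hodge structure containing $\gamma$ need \emph{not} have rank $2$; the existence of a rank-two rational piece of type $(3,0)+(0,3)$ is precisely the strictly stronger ``rank-two attractor'' hypothesis, whereas the conjecture concerns arbitrary attractor points, for which that minimal sub-Hodge structure can be all of $H^3(X,\mathbb{Q})$. Second, your key lemma --- that a rank-$2$ polarized $\mathbb{Q}$-Hodge structure with no $(p,p)$ part has Mumford--Tate group contained in a torus, hence is CM --- is false. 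For type $(3,0)+(0,3)$ the Deligne torus acts on $V_{\mathbb{C}}=V^{3,0}\oplus V^{0,3}$ by $\mathrm{diag}(z^3,\bar z^3)$, and since the cube map is surjective on $\mathbb{C}^{\times}$ this image is the \emph{same} real subgroup as for a weight-one structure $\mathrm{diag}(z,\bar z)$; its $\mathbb{Q}$-Zariski closure is therefore generically all of $GL_2$, exactly as for a non-CM elliptic curve. This is not a technicality: in the known rank-two attractor examples of \cite{COES}, the $(3,0)+(0,3)$ piece corresponds to a weight-$4$ newform which does not have CM. Consequently the field $K(\gamma)$ that your first and third bullets are built on does not exist by this mechanism, and even in the genuinely CM case your assertion that the periods are algebraic is wrong (CM periods are transcendental; only the projective ratios are algebraic). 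The honest conclusion is that no proof is currently possible, and the paper does not claim one.
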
 
The geometry of the attractor CY varieties is very much tied with their arithmetic and class field theory. For instance; for CY 3-folds Elliptically fibred over a K3 surface we we have the following. First we write $H^3(K3 \times E, \mathbb{Z})=H^2(K3, \mathbb{Z}) \bigoplus H^2(K3, \mathbb{Z})$. Thus we may take $\gamma =p \oplus q$ for $p, q \in H^2(K3, \mathbb{Z})$: In fact to do this one needs to solve the equations 
\begin{equation}
\begin{aligned}
2 \text{Im} \overline{C} \int_{a \times \gamma^I}dz \wedge \Omega^{2,0}=p\\
2 \text{Im} \overline{C} \int_{b \times \gamma_I}dz \wedge \Omega^{2,0}=q
\end{aligned}
\end{equation}
where $(\gamma^I , \gamma_I)$ is a symplectic basis for $H_2(K3 , \mathbb{Z})$ and $(a,b)$ are the same for $E$. We find that $\Omega^{2,0}=q-\bar{\tau}p$ where $\tau$ satisfies the equation
\begin{equation}
\int \Omega^{2,0} \wedge \Omega^{2,0} =0 \Rightarrow \langle p,p \rangle \tau^2 -2 \langle p,q \rangle \tau +\langle q,q \rangle =0
\end{equation}
Let $D$ be the discriminant of this equation. The following theorem is known.
\begin{theorem}
There is a 1-1 correspondence between attractor K3 surfaces and $PSL_2\mathbb{Z}$-equivalence classes of positive definite binary quadratic forms. Suppose $(p,q) \in H^2(K3, \mathbb{Z})$ defines a primitive lattice. Then the attractor variety determined by $\gamma=(p,q)$ is $E_{\tau} \times Y_{2Q}$ where 
\begin{equation} 
\tau=\frac{\langle p,q \rangle+\sqrt{-D}}{\langle p, p \rangle}
\end{equation} 

and $Y_{2Q}$ is the Shioda-Inose K3 surface associated to the quadratic form 
\begin{equation}
2Q=\begin{pmatrix}\langle p,p \rangle & -\langle p, q \rangle \\ -\langle p, q \rangle &\langle q, q \rangle \end{pmatrix}
\end{equation}

\end{theorem}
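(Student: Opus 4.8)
The plan is to combine the explicit solution of the attractor equations recorded above with the classical Shioda--Inose classification of singular K3 surfaces, reading the arithmetic of positive definite binary quadratic forms off the transcendental lattice of the K3 factor. First I would extract the two geometric factors. The relation $\Omega^{2,0}=q-\bar{\tau}p$ shows that the period of the K3 factor lies in the rank-two complex subspace spanned by the integral classes $p,q\in H^2(K3,\mathbb{Z})$, while the quadratic $\langle p,p\rangle\tau^2-2\langle p,q\rangle\tau+\langle q,q\rangle=0$ forces $\tau$ to be imaginary quadratic. Solving it gives $\tau=\frac{\langle p,q\rangle+\sqrt{-D}}{\langle p,p\rangle}$ with $D=\langle p,p\rangle\langle q,q\rangle-\langle p,q\rangle^2=\det(2Q)>0$ exactly when $2Q$ is positive definite, so the elliptic factor is $E_\tau$ with complex multiplication by an order in $\mathbb{Q}(\sqrt{-D})$.

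Next I would identify the K3 factor as singular. Since $\Omega^{2,0}\in\langle p,q\rangle\otimes\mathbb{C}$ is genuinely of type $(2,0)$, the smallest primitive sublattice $T\subset H^2(K3,\mathbb{Z})$ whose complexification contains $\Omega^{2,0}$ has rank at least two (it contains the $\mathbb{R}$-independent classes $\mathrm{Re}\,\Omega,\mathrm{Im}\,\Omega$) and is contained in $\langle p,q\rangle$; for $(p,q)$ spanning a primitive rank-two lattice this forces $T=\langle p,q\rangle$ with Gram matrix $2Q$. A K3 surface with rank-two transcendental lattice has Picard number $20$, hence is a singular K3, and the lattice $T$ equipped with the orientation coming from $\mathrm{Re}\,\Omega,\mathrm{Im}\,\Omega$ determines it uniquely.

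By the Shioda--Inose theorem the assignment $S\mapsto T(S)$ is a bijection between isomorphism classes of singular K3 surfaces and $SL_2(\mathbb{Z})$-equivalence classes of positive definite even binary quadratic forms; because $-I$ acts trivially on the Gram matrix this descends to $PSL_2(\mathbb{Z})$-equivalence, and it identifies the K3 factor with the Shioda--Inose surface $Y_{2Q}$. I would then check that $(p,q)\mapsto 2Q$ is well defined and bijective: a change of integral basis of $\langle p,q\rangle$ by $g\in SL_2(\mathbb{Z})$ sends $2Q\mapsto g^{t}(2Q)g$, which is proper equivalence of forms and leaves $Y_{2Q}$ unchanged, while primitivity makes the class of $2Q$ well defined; injectivity follows from the Torelli theorem and surjectivity from the surjectivity of the K3 period map. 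Assembling the two factors yields the attractor variety $E_\tau\times Y_{2Q}$.

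The main obstacle, and the step requiring the most care, is the surjectivity together with the exact matching of normalizations: one must show that every $PSL_2(\mathbb{Z})$-class of positive definite even binary forms is realized by an attractor configuration $(p,q)$, and that the order in $\mathbb{Q}(\sqrt{-D})$ by which $E_\tau$ has complex multiplication is precisely the one prescribed by $2Q$. This rests on the surjectivity of the K3 period map and on complex multiplication theory for $E_\tau$; once it is secured, the product decomposition and the bijection follow from Shioda--Inose together with the Torelli theorem.
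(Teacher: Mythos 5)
The paper does not actually prove this theorem: it introduces the attractor equations for $\gamma=p\oplus q$, states ``the following theorem is known,'' and refers to Moore \cite{M}, so there is no internal proof to compare against. Your sketch is essentially Moore's original argument and is correct in outline: solve the attractor equations to get $\Omega^{2,0}=q-\bar{\tau}p$ with $\tau$ imaginary quadratic, identify the transcendental lattice $T$ with the primitive rank-two lattice spanned by $p,q$, conclude the K3 factor is singular (Picard number $20$), and invoke the Shioda--Inose classification of singular K3 surfaces by equivalence classes of positive definite \emph{even} binary quadratic forms, together with Torelli and surjectivity of the period map. Three details deserve more care than you give them. First, positive definiteness of $2Q$ is not an assumption to be carried along but a consequence of the period-domain condition $\langle \Omega, \overline{\Omega}\rangle>0$: the real span of $p,q$ coincides with the plane spanned by $\text{Re}\,\Omega$ and $\text{Im}\,\Omega$, on which the intersection form is positive definite; you state the equivalence with $\det(2Q)>0$ but never derive definiteness from the attractor data. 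Second, the Gram matrix of $T$ in the basis $(p,q)$ has $+\langle p,q\rangle$ off the diagonal, whereas $2Q$ in the theorem has $-\langle p,q\rangle$; these two forms are improperly equivalent and in general lie in different $PSL_2(\mathbb{Z})$-classes (inverse classes in the form class group), so to land exactly on $Y_{2Q}$ you must take the basis $(p,-q)$, i.e. fix the orientation of $T$ induced by $(\text{Re}\,\Omega,\text{Im}\,\Omega)$ --- an orientation issue Moore's treatment handles explicitly and your sketch elides. Third, surjectivity is less of an obstacle than you suggest: given any even positive definite binary form, Shioda--Inose produces the singular K3 realizing it as transcendental lattice, and taking $p,q$ to be the corresponding basis of that lattice manufactures the attractor data directly, so one does not need the full strength of surjectivity of the period map onto the whole period domain.
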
 
The absolute value of the associated numerical function $Z_{\gamma}$ at the attractor point is calculated as $|Z_{\gamma}(X_{\gamma}, \gamma)|=\sqrt{-D}$ where $D \leq 0$. In this case the number field under argument of the conjecture \ref{thm:AttConj} can be explicitly calculated in terms of the quadratic extension $\mathbb{Q}[\sqrt{-D}]$ and special values of the $j$-function, see \cite{M}. 
\vspace{0.5cm}

\end{document}